\documentclass[11pt,a4paper]{amsart}

\usepackage[utf8]{inputenc}
\usepackage[T1]{fontenc}
\usepackage[english]{babel}
\usepackage{amsmath,amssymb,amsthm,mathtools,color}
\usepackage{geometry}
\usepackage{hyperref}
\usepackage{enumitem}

\newtheorem{theorem}{Theorem}
\newtheorem{proposition}{Proposition}
\newtheorem{lemma}{Lemma}
\newtheorem{corollary}{Corollary}
\newtheorem{remark}{Remark}

\newcommand{\R}{\mathbb R}

\newcommand{\eps}{\varepsilon}
\newcommand{\cA}{\mathcal A}

\newcommand{\cQstar}{\mathcal Q_*}

\title[Optimizer for quantitative isoperimetric ratio in $\R^3$]{An existence result for a quantitative isoperimetric inequality in $\R^3$ involving the Hausdorff asymmetry}

\author[Bove]{Silvio Bove}

\author[Croce]{Gisella Croce}

\author[Pisante]{Giovanni Pisante}

\address[S. Bove]{Dipartimento di Ingegneria Elettrica e dell'Informazione ``M. Scarano'', Universit\`{a} degli Studi di Cassino e del Lazio Meridionale, Via G. Di Biasio n.~43, 03043 Cassino (FR), Italy}
\email{silvio.bove@unicas.it}

\address[G. Croce]{SAMM, UR 4543 Universit\'e Paris 1 Panth\'eon-Sorbonne, FR 2036 CNRS, 90, rue Tolbiac 75013 France}
\email{gisella.croce@univ-paris1.fr}

\address[G. Pisante]{Dipartimento di Matematica e Fisica, Universit\`a degli Studi della Campania ``Luigi Vanvitelli'', Viale Lincoln 5, 81100 Caserta, Italy}
\email{giovanni.pisante@unicampania.it}

\begin{document}

\begin{abstract}
We study the existence of an optimizer for a quantitative isoperimetric ratio $\mathcal{Q}_*$ in $\mathbb{R}^3$ involving the Hausdorff asymmetry. We prove that $\mathcal{Q}_*$ attains its minimum over the class $\mathcal{A}$ of convex bodies of fixed volume.
\end{abstract}

\maketitle

\noindent\textbf{Keywords.} Quantitative isoperimetric inequality; Hausdorff distance; convex bodies; capacitary estimates; stability near the ball.

\medskip
\noindent\textbf{MSC 2020.} 49Q10; 52A40; 49Q20; 31A15.

\section{Introduction}
\label{sec:introduction}
The classical isoperimetric inequality states that among all sets $E \subset \mathbb{R}^n$ with finite Lebesgue measure $|E|$, the Euclidean ball $B$ minimizes the perimeter $P(E)$. This result is expressed by the positivity of the isoperimetric deficit
\begin{equation}\label{eq:isoperimetric}
    \delta(E) := \frac{P(E) - P(B)}{P(B)} \ge 0,
\end{equation}
where $B$ is a ball such that $|B|=|E|$, and equality holds if and only if $E$ coincides with a ball.
Starting from the pioneering work \cite{Bonnesen1924} of Bonnesen in the 1920s for planar convex sets, a central theme in geometric analysis has been the stability of this inequality, namely the problem of quantifying how close a set $E$ is to a ball in terms of the isoperimetric deficit. This leads to quantitative inequalities of the form
\begin{equation}\label{eq:quantitative}
    \delta(E) \ge c \, \alpha(E)^k,
\end{equation}
where $\alpha(E)$ is a scale-invariant asymmetry index measuring the deviation of $E$ from sphericity, and $c, k$ are positive constants depending only on the dimension $n$.

Over the last thirty years, several notions of asymmetry have been employed in the literature to establish inequalities of the form \eqref{eq:quantitative}. The first one, often called the \textit{spherical deviation}, involves the Hausdorff distance and was introduced by Fuglede in \cite{Fuglede1989}:
\begin{equation}
    \lambda_{\mathcal{H}}(K) := d_H(K, B(x_G, r_K)),
\end{equation}
where $d_H$ denotes the Hausdorff distance and $B(x_G, r_K)$ is the ball centered at the barycenter $x_G$ of $K$, with volume equal to $|K|$. Fuglede established a series of stability inequalities for convex sets and nearly spherical sets, later extended in \cite{FGP12} to a larger family of sets.
The resulting estimates depend strongly on the dimension $n$. While a quadratic estimate $\delta(K) \ge C \lambda_{\mathcal{H}}(K)^2$ holds in the plane ($n=2$), Fuglede showed that this is no longer optimal for $n \ge 3$. In particular, in dimension $n=3$ the sharp stability estimate takes the form
\begin{equation}\label{eq:fuglede_est}
    \delta(K) \ge C \frac{\lambda_{\mathcal{H}}(K)^2}{|\log \lambda_{\mathcal{H}}(K)|},
\end{equation}
involving a logarithmic correction term. Hausdorff-type estimates in convex and anisotropic settings are discussed by Esposito, Fusco and Trombetti~\cite{EFT2005}.

To measure the distance of a set from a ball, two $L^1$-type asymmetries have also been considered. The Fraenkel asymmetry (see \cite{Hall1992})
\begin{equation}\label{def-lambda}
\lambda(\Omega)=\inf_{y\in \mathbb{R}^n}\frac{|\Omega \Delta B_y|}{|\Omega|},
\end{equation}
where $B_y$ is the ball centered at $y$ such that $|B_y|=|\Omega|$ and $\Delta$ denotes the symmetric difference of sets. It was introduced by Fraenkel. The barycentric asymmetry
\[
\lambda_0(\Omega)=\frac{|\Omega \Delta B_{x^G}|}{|\Omega|},
\]
was later proposed and studied by Fuglede in \cite{Fuglede93}. Sharp quantitative isoperimetric inequalities of the form \eqref{eq:quantitative} with $k=2$ have been established for these asymmetries in $\R^n$ by several authors (see, for instance, \cite{AFN11}, \cite{FiMP}, \cite{FMP08}, \cite{CL12}, \cite{FGP12}, \cite{DL}, \cite{Fuglede93}, \cite{Gambicchia}).

The question of the existence and the shape of an optimal set for such inequalities, however, remains largely open. The results available in the literature concern, almost exclusively, problems formulated in the plane.
Alvino, Ferone and Nitsch \cite{AFN11} proved the existence of an optimal set for the quantitative isoperimetric inequality with the Fraenkel asymmetry, $\delta \geq C \lambda^2$, within the class of convex sets, and identified its shape as a precisely described stadium.
In the case of the whole class of sets in $\R^2$, the existence of a minimizer has been proved, but the shape is only conjectured (see \cite{CL12} and \cite{bianchini:hal-01181104}).
The existence of a minimizer for $\delta / \lambda_0^2$ has been established within the class of convex sets of the plane (see \cite{BCHAnnali} and \cite{preprint}).
As for the Hausdorff asymmetry, Alvino, Ferone and Nitsch \cite{AFN09} studied $\delta \geq C \lambda_{\mathcal{H}}^2$ within the class of convex sets, exhibiting an optimal set.

In the present paper we consider a similar problem in $\R^3$, with the Hausdorff asymmetry
\[
 \lambda_*(F):=\inf_{x\in\R^3}d_H(F,B+x).
\]
More precisely, we study
\begin{equation}\label{eq:Q-intro}
 \cQstar(F)
 :=
 \frac{
 \delta(F)
 \log\left(\delta(F)+\frac1{\delta(F)}\right)
 P(F)^3
 }{
 \lambda_*(F)^2
 },
\end{equation}
on the class
\[
 \cA:=\{F\subset\R^3:\ F \text{ is a convex body and } |F|=4\pi/3\}.
\]
We set the value of the functional $\cQstar$ to be $+\infty$ on any ball, since both \(\delta(F)\) and \(\lambda_*(F)\) vanish.
The presence of the logarithm in the definition is natural and optimal, as observed by Fuglede~\cite{Fuglede1989}. Indeed, a localized radial displacement of height $\lambda$ on a small spherical cap has a capacitary cost of order $\lambda^2 / |\log\lambda|$, rather than of order $\lambda^2$, as one sees by linearizing the perimeter functional near the sphere (cf.\ Lemmas~\ref{lem:one-disk-logarithmic-estimate},~\ref{lem:spectral}, and~\ref{lem:fuglede-small-slope}). Hence the natural local scale is
\[
 \delta(F)\simeq
 \frac{\lambda_*(F)^2}{|\log\lambda_*(F)|},
\]
and the logarithmic correction is designed to compensate for this loss. This capacitary interpretation is consistent with the classical connection between asymmetry and capacity developed by Hall, Hayman and Weitsman~\cite{HHW1991}.

The perimeter factor $P(F)^3$ plays a different role. It is harmless in the local analysis near the ball, but prevents minimizing sequences from escaping through elongated convex bodies; in other words, it ensures the coercivity of the functional. If $D(F)$ denotes a diameter-type radial size, convexity and the volume constraint yield $P(F) \gtrsim D(F)^{1/2}$ for large $D(F)$. In the same regime $\lambda_*(F)$ is comparable with $D(F)$, while $\delta(F)$ grows like $P(F)$. Thus
\[
 \frac{\delta(F)P(F)^3}{\lambda_*(F)^2}
 \gtrsim
 \frac{P(F)^4}{D(F)^2}
 \gtrsim 1,
\]
and the additional logarithmic factor forces divergence along such sequences. More generally, replacing $P(F)^3$ by $P(F)^q$ would yield the large-diameter scale
\[
 \frac{\delta(F)P(F)^q}{\lambda_*(F)^2}
 \gtrsim D(F)^{(q-3)/2}.
\]
The quotient defining $\cQstar$ thus combines two distinct features: the logarithmic correction captures the local capacitary scale near the ball, while the perimeter factor provides the global coercivity.

Our main result is the existence of a minimizer for the variational problem associated with $\cQstar$ in the class $\cA$. The main difficulty is that a minimizing sequence for $\cQstar$ may fail to converge to an admissible finite-energy minimizer only by approaching the ball, where both $\delta(F)$ and $\lambda_*(F)$ vanish. One therefore needs a lower bound, in this degenerate regime, that is strictly larger than the value of at least one explicit competitor.
For every sequence $(F_j)_j\subset\cA$ such that $\lambda_*(F_j)\to0$, we will prove that
\begin{equation}\label{eq:intro-local-gap}
 \liminf_{j\to\infty}\cQstar(F_j)
 \ge
 \frac{64}{3}\pi^3.
\end{equation}
The constant is not claimed to be optimal: a sharper capacitary analysis may lead to a larger local constant, but the lower bound \eqref{eq:intro-local-gap} is already sufficient to apply the direct methods of the calculus of variations and establish existence. Indeed, the explicit prolate spheroid constructed below satisfies
\[
 \cQstar(F)<368<16\pi^3<\frac{64}{3}\pi^3.
\]
Consequently, a minimizing sequence with values below $16\pi^3$ cannot converge, up to translations, to the family of balls.

\section{Main results}
\label{sec:setting}

In this section we state the
local gap near the family of balls, construct an explicit competitor whose value lies below that gap, and then combine these two
facts to prove the existence of a minimizer for \(\cQstar\) in \(\cA\).

We start by stating in the next theorem the lower bound for $\cQstar$ near the balls. The proof of this result will be discussed in the next section.

\begin{theorem}
\label{thm:rough-local-gap}
Let \((F_j)_j\subset\cA\) be a sequence such that
\[
 \lambda_*(F_j)\to0.
\]
Then
\[
 \liminf_{j\to\infty}\cQstar(F_j)
 \ge
 \frac{64}{3}\pi^3
 >
 16\pi^3.
\]
\end{theorem}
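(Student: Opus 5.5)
We reduce to a quantitative Fuglede-type estimate with an explicit constant. Since $\lambda_*(F_j)\to0$ and the $F_j$ are convex bodies of volume $4\pi/3$, after translating (which changes none of the quantities in $\cQstar$) we may assume $F_j\to B$ in Hausdorff distance. For $j$ large, $F_j$ is then nearly spherical and star-shaped with respect to the origin, so its boundary can be written as $\{(1+u_j(x))x: x\in\mathbb S^2\}$ with $\|u_j\|_{L^\infty}\to0$. Because $F_j$ is convex, $u_j$ is Lipschitz with $\|\nabla u_j\|_{L^\infty}\to0$ (convexity upgrades uniform closeness to gradient closeness). In particular $P(F_j)\to4\pi$, $\delta(F_j)\to0$, and $\log(\delta+1/\delta)=|\log\delta|(1+o(1))$. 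Writing $\lambda_j:=\lambda_*(F_j)$, the statement therefore reduces to showing
\[
\liminf_{j\to\infty}\frac{\delta(F_j)\,|\log\delta(F_j)|}{\lambda_j^2}\ \ge\ \frac{64\pi^3/3}{(4\pi)^3}=\frac13 .
\]

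\textbf{Step 1: linearization.} We choose the translation that optimizes $\lambda_*$, or alternatively the barycentric one, which is comparable. The volume constraint gives $\int u+\int u^2+\tfrac13\int u^3=0$. The standard Fuglede expansion, available through the small-slope result Lemma~\ref{lem:fuglede-small-slope}, then yields
\[
\delta(F_j)\ \ge\ \frac{1-o(1)}{8\pi}\left(\int_{\mathbb S^2}|\nabla u_j|^2-2\int_{\mathbb S^2}u_j^2\right).
\]
Using Lemma~\ref{lem:spectral}, we remove the degree-$0$ and degree-$1$ spherical harmonic components. The degree-$0$ part is fixed by the volume constraint, and the degree-$1$ part is killed by the centering up to higher-order terms. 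This gives $\delta\gtrsim c\|\nabla u_j\|_{L^2}^2$ with an explicit constant, since $(1-2/6)\ge 2/3$ on the degree $\ge2$ modes.

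\textbf{Step 2: capacitary lower bound.} Let $\lambda_j\approx\|u_j\|_\infty$, up to the choice of center, and let $x_0$ be a point where $|u_j(x_0)|\approx\lambda_j$. The Lipschitz bound $|\nabla u_j|\le L_j\to0$ forces $|u_j|\ge\lambda_j/2$ on a cap of radius $r\sim\lambda_j/L_j$. On the other hand, $\|u_j\|_{L^2}^2$ is controlled by $\delta$, so $u_j$ is small in $L^2$ away from this cap. Applying the one-disk logarithmic estimate, Lemma~\ref{lem:one-disk-logarithmic-estimate}, on the annulus between radius $r$ and the scale where $u$ has decayed, we get
\[
\int|\nabla u_j|^2\ \ge\ \frac{2\pi\,(\lambda_j/2)^2\,(1-o(1))}{\log(R/r)} .
\]
Bounding $\log(R/r)\le(1+o(1))|\log\lambda_j|$ requires care, which we address below. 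Combining with Step 1 gives $\delta\ge c_0\lambda_j^2/|\log\lambda_j|$. Since $|\log\delta|=(2+o(1))|\log\lambda_j|$ once $\delta$ is polynomially comparable to $\lambda^2$, we obtain
\[
\delta|\log\delta|/\lambda_j^2\ \ge\ 2c_0 .
\]
We then track constants to check that $2c_0\ge1/3$. There is ample room here, because the $\log(\delta+1/\delta)$ factor doubles the logarithm.

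\textbf{Main obstacle.} The hard step is Step 2 in the regime where $L_j$ is not small compared to a power of $\lambda_j$. Here the cap radius $r$ could be tiny, so $\log(1/r)$ could be much larger than $|\log\lambda_j|$. The plan is to exploit convexity. A convex body whose support function exceeds that of the ball by $\lambda$ at one direction contains the convex hull of $B$ and a point at distance $1+\lambda$. This forces a cone-like bump over a cap of radius $\gtrsim\sqrt{\lambda}$, which gives $\log(1/r)\le(\tfrac12+o(1))|\log\lambda|$. In the opposite case, an inward dent, the point of $B$ outside $F$ is cut off by a supporting plane, giving a cap of radius $\sim\sqrt\lambda$ with deficit $\ge\lambda/2$. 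Either way we get $r\gtrsim\sqrt{\lambda_j}$ independently of $L_j$, which is what makes the constant explicit. If the constant still falls short, we fall back on the crude bound $\log(R/r)\le|\log\lambda_j|$ and check that the factor $2$ coming from $|\log\delta|\sim2|\log\lambda|$ compensates.
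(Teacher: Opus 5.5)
Your architecture matches the paper's: the Fuglede expansion, then removal of the degree-$0$ and degree-$1$ modes, which gives $Q:=\int|\nabla u|^2-2\int u^2\ge\frac23\int|\nabla u|^2\,(1-o(1))$. This is exactly what the paper's $\min_{\mu}\max\{2\pi\alpha^2-2\mu,4\mu\}$ step encodes. After that come the one-disk logarithmic estimate and the doubling $\log(1/\delta)\approx2|\log\lambda|$.

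\textbf{The gap: the constant does not close.} You postpone the constant bookkeeping, and as written it fails. With height $\lambda/2$ on a cap of radius $r\gtrsim\sqrt\lambda$, Lemma~\ref{lem:one-disk-logarithmic-estimate} gives $\int|\nabla u|^2\ge 2\pi(\lambda/2)^2/(\tfrac12|\log\lambda|)=\pi\lambda^2/|\log\lambda|$. Hence $\delta\ge\frac1{8\pi}\cdot\frac23\cdot\pi\,\lambda^2/|\log\lambda|$, i.e.\ $c_0=1/12$ and $2c_0=1/6$, which is half of the required $1/3$. In general, height $\alpha\lambda$ on a cap of radius $\sim\lambda^\beta$ yields $\alpha^2/(3\beta)$. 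So there is no ``ample room'' at $\alpha=1/2$. Your fallback $\log(R/r)\le|\log\lambda|$ (that is, $\beta=1$) makes things worse, giving $1/12$; the factor $2$ from $|\log\delta|$ is already counted in both computations.

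\textbf{The fix.} Let the height fraction tend to $1$, as the paper does. This is available in your setting. By Lemma~\ref{lem:automatic-slope}, $\|\nabla u\|_\infty\le2\sqrt\lambda\,(1+O(\lambda))$. So around a point where $|u|=\|u\|_\infty$, one has $|u|\ge\alpha\lambda(1-o(1))$, with constant sign, on a disk of radius $\frac{1-\alpha}2\sqrt\lambda$. In particular your ``main obstacle'' regime never occurs. Note also that $F$ only contains $B_{1-\lambda}$, not $B$, though the cone argument still gives radius $\sim\sqrt\lambda$. You then get $2\alpha^2/3$, so any $\alpha\ge1/\sqrt2$ suffices, and $\alpha\to1$ gives $\frac23(4\pi)^3=\frac{128}{3}\pi^3$. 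This actually improves the paper, whose Lemmas~\ref{lem:linear-outward} and~\ref{lem:linear-inward} only produce caps of radius $c_\alpha\lambda$ ($\beta=1$).

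\textbf{Two smaller points.}
First, the center. The $\lambda_*$-optimal center does \emph{not} kill the degree-one modes up to higher order. For a single small bump of height $\lambda$, it is shifted by about $\lambda/2$ toward the bump, leaving a degree-one component of $L^2$-size $\sim\lambda$. Then the hypothesis $\|u\|_{L^2}=o(\lambda)$ of Lemma~\ref{lem:one-disk-logarithmic-estimate} fails. You must use the barycentric center (where $\int u\,\omega\,d\sigma=O(\|u\|_2^2)$) or the recentering of Lemma~\ref{lem:first-moment-recentering}. Run the argument with $\tilde\lambda:=\|u\|_\infty=d_H(F-x,B)$, and only at the end use $\tilde\lambda\ge\lambda_*(F)$, as the paper does with $\lambda_*(F_j)\le\lambda_j$.

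Second, the size of $\delta$. ``$\|u\|_{L^2}^2$ controlled by $\delta$'' gives $\|u\|_{L^2}=o(\lambda)$ only if $\delta=o(\lambda^2)$. Likewise, $|\log\delta|\ge(2-o(1))|\log\lambda|$ needs $\delta\le\lambda^{2-o(1)}$. Both follow by first passing to a $\cQstar$-bounded subsequence (otherwise there is nothing to prove), as in Corollary~\ref{cor:remainder-Q-bounded}.
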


The constant \((64/3)\pi^3\) is not claimed to be sharp. It is the constant
obtained from the present combination of a one-disk capacitary estimate and
the recentered spectral gap. This lower bound is sufficient for the existence
argument, since the explicit prolate spheroid constructed below satisfies
\(\cQstar(F)<16\pi^3\), as we prove in Proposition~\ref{prop:competitor-below-gap} below.

\begin{proposition}
\label{prop:competitor-below-gap}
There exists a set \(F\in\cA\) such that
\begin{equation}\label{eq:F-below-16pi3}
 \inf_{E\in\cA}\cQstar(E)
 \le
 \cQstar(F)
 <16\pi^3.
\end{equation}
\end{proposition}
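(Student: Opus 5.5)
The plan is to take as competitor a prolate spheroid of revolution
\[
 F_c:=\Big\{x\in\R^3:\ \frac{x_1^2+x_2^2}{a^2}+\frac{x_3^2}{c^2}\le1\Big\},\qquad a^2c=1,\ c>1,
\]
so that \(|F_c|=4\pi/3\) and \(F_c\in\cA\). I would then compute each ingredient of \(\cQstar(F_c)\) in closed form and choose a specific value of \(c\) for which the numerical value falls below \(16\pi^3\approx 496\). The first inequality in \eqref{eq:F-below-16pi3} is trivial once \(F_c\in\cA\), so all the work is in the strict bound.

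\textbf{Step 1: perimeter and deficit.} The perimeter of \(F_c\) is given by the classical formula
\[
 P(F_c)=2\pi a^2\Big(1+\frac{c}{ae}\arcsin e\Big),\qquad e=\sqrt{1-a^2/c^2}.
\]
Since \(P(B)=4\pi\), the deficit is \(\delta(F_c)=P(F_c)/(4\pi)-1\), which is explicit.

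\textbf{Step 2: the asymmetry.} I will show that \(\lambda_*(F_c)=\max\{c-1,\,1-a\}=c-1\) (for \(c\) large). Write \(h_F\) for the support function of \(F\). For convex bodies, \(d_H(F,B+x)=\sup_{\nu\in S^2}|h_F(\nu)-1-x\cdot\nu|\), and this is a convex function of \(x\). Because \(F_c=-F_c\), the function is also even in \(x\). Hence \(x=0\) is a minimizer, and
\[
 \lambda_*(F_c)=\sup_\nu|h_{F_c}(\nu)-1|.
\]
The support function ranges monotonically between \(a\) (equatorial directions) and \(c\) (polar directions), which gives the formula \(\max\{c-1,1-a\}\).

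\textbf{Step 3: choice of \(c\) and numerics.} A rough scan shows how the parameter must be chosen. For \(c\) close to \(1\), \(\cQstar(F_c)\) is large, of order \(\log(1/\lambda)\) times \(P^3\). This is the capacitary effect from the introduction: ellipsoids have \(\delta\sim\lambda^2\), not \(\lambda^2/|\log\lambda|\). For large \(c\) the quotient decreases: \(c=4\) gives roughly \(424\), and \(c=8\) gives roughly \(367\). So I would fix \(c=8\), with \(a=1/(2\sqrt2)\). The main obstacle is making the estimate rigorous with a small margin; the margin below \(16\pi^3\) is comfortable, but I want the stated bound \(368\). To do so, I would bound each quantity with explicit rational enclosures:
\begin{itemize}
 \item \(e=\sqrt{1-1/512}\) and \(\arcsin e=\pi/2-\arcsin(1/\sqrt{512})\), with \(\arcsin t\) enclosed between \(t\) and \(t+t^3/5\) for small \(t\);
 \item \(P\), hence \(\delta\in(1.19,1.21)\);
 \item \(\log(\delta+1/\delta)\), enclosed between elementary bounds for \(\log 2\);
 \item \(P^3\);
 \item \(\lambda_*^2=49\).
\end{itemize}
Using monotonicity of each factor in its argument, these enclosures yield an upper bound for the product. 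If the margin at \(c=8\) proves too tight for clean estimates, I would instead take \(c\in[6,10]\) where the function is flatter near its minimum, and conclude only the weaker but sufficient bound \(\cQstar(F_c)<16\pi^3\).
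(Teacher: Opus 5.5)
Your strategy is the paper's: a volume-normalized prolate spheroid, the closed-form area formula, $\lambda_*=c-1$, and explicit numerics. Your argument that $x=0$ is an optimal translation uses convexity and evenness of $x\mapsto\sup_{\nu}|h_F(\nu)-1-x\cdot\nu|$. It is valid, and slightly more general than the paper's evaluation of support functions at $\pm e_3$.

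The problem is the numerics at your chosen $c=8$. With $a^2=1/8$ one has $P/(4\pi)=\frac{a^2}{2}\bigl(1+\frac{c}{ae}\arcsin e\bigr)\approx 35.577/16$. Hence $P(F_8)\approx 27.94$ and $\delta(F_8)\approx 1.2235$, which lies outside your claimed enclosure $(1.19,1.21)$. Any upper bound built on $\delta<1.21$ is therefore false. Correspondingly $\cQstar(F_8)\approx 0.8728\cdot 27.94^3/49\approx 388.6$, not $367$, so the bound $368$ you aim for cannot be obtained at $c=8$.

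Your scan is also wrong in shape. Along this family the quotient is approximately $423$, $378$, $367.4$, $374$, $389$, $429$ for $c=4,5,6,7,8,10$. So it has its minimum near $c=6$ and then increases. Indeed it tends to $+\infty$ like $\frac{\pi^7}{8}\log c$, which is the coercivity mechanism described in the introduction.

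None of this breaks the proposition itself, since only $\cQstar(F)<16\pi^3\approx 496$ is needed, and the true margin at $c=8$ is large. Using $\arcsin e<\frac\pi2-\frac1{\sqrt{512}}$ and $e>0.99902$ you get $P(F_8)<27.95$, hence $\delta(F_8)<1.2243$. Since here $\delta>1$, you need $t\mapsto t\log(t+t^{-1})$ to be increasing on $(1,\infty)$, not only on $(0,1)$ as in the paper; this holds because both factors increase there. You then obtain $\cQstar(F_8)<390<16\pi^3$.

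If you want the sharper value $368$, take $c=6$, $a=1/\sqrt6$, as the paper does. Then $\lambda_*=5$, $P<24.23$, $\delta<0.929$, and $\cQstar<368$. This is essentially the best this one-parameter family can give: its minimum is about $367.4$, attained near $c=6$.
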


\begin{proof}
Consider the prolate spheroid
\[
 F:=
 \left\{(x,y,z)\in \mathbb{R}^3 :
 \frac{x^2+y^2}{a^2}+\frac{z^2}{c^2}\le1
 \right\},
\]
where $a=\frac1{\sqrt6}$, $c=6$, so that $a^2c=1$,
and therefore
$|F|=\frac{4\pi}{3}a^2c=\frac{4\pi}{3}$, so that \(F\in\cA\).
We now compute \(\lambda_*(F)\) directly from support functions. The support
function of \(F\) is
\[
        h_F(\nu)=\sqrt{a^2(\nu_1^2+\nu_2^2)+c^2\nu_3^2},
        \qquad \nu\in S^2.
\]
Hence
\[
        d_H(F,B)=\|h_F-1\|_{L^\infty(S^2)}
        =\max\{c-1,1-a\}=5.
\]
Moreover, for every translation vector \(x\in\R^3\), evaluating the support
functions of \(F\) and \(B+x\) in the directions \(e_3\) and \(-e_3\) gives
\[
        d_H(F,B+x)
        \ge
        \max\{|c-1-x_3|,|c-1+x_3|\}
        \ge c-1=5.
\]
Therefore the optimal translation is the origin and
\[
        \lambda_*(F)=d_H(F,B)=5.
\]
The surface area of a prolate spheroid with semiaxes \(a,a,c\),
where \(0<a<c\), is
\begin{equation}\label{eq:prolate-area-formula}
 P(F)=2\pi a^2
 \left(
 1+\frac{c}{ae}\arcsin e
 \right),
 \qquad
 e=\sqrt{1-\frac{a^2}{c^2}}.
\end{equation}
For the above choice of \(a\) and \(c\), formula~\eqref{eq:prolate-area-formula}
gives the explicit numerical bounds
\[
        P(F)<24.23,
        \qquad
        \delta(F)=\frac{P(F)-4\pi}{4\pi}<0.929.
\]
Since \(\lambda_*(F)=5\), and since the function
\(t\mapsto t\log(t+t^{-1})\) is increasing on \((0,1)\), we get
\[
        \cQstar(F)
        <
        \frac{0.929\,\log(0.929+0.929^{-1})\,24.23^3}{25}
        <368<16\pi^3.
\]
This proves the claim.
\end{proof}

Now we are in position to prove the main existence theorem.

\begin{theorem}
\label{thm:existence-clean}
The functional \(\cQstar\) attains its minimum on \(\cA\).
\end{theorem}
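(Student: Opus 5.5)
We argue by the direct method. Let \(m:=\inf_{\cA}\cQstar\); by Proposition~\ref{prop:competitor-below-gap}, \(m<16\pi^3\). We pick a minimizing sequence \((F_j)_j\subset\cA\) with \(\cQstar(F_j)<16\pi^3\) for all \(j\). Since \(\cQstar\) is invariant under translations (\(\delta\), \(P\) and \(\lambda_*\) are), we may translate each \(F_j\) so that its barycenter is at the origin. None of the \(F_j\) is a ball, because \(\cQstar=+\infty\) on balls.

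\textbf{Step 1: uniform bounds.} We show that \(\mathrm{diam}(F_j)\) is bounded. Suppose instead that \(D_j:=\mathrm{diam}(F_j)\to\infty\) along a subsequence. For convex bodies of volume \(4\pi/3\) we have \(P(F_j)\gtrsim D_j^{1/2}\): a body of diameter \(D\) contains a segment of length \(D\), and together with the volume constraint this forces a cross-sectional width of order \(D^{-1/2}\) somewhere, hence lateral area of order \(D\cdot D^{-1/2}\). The remaining bounds are elementary:
\begin{itemize}
\item \(\lambda_*(F_j)\le d_H(F_j,B)\le D_j+1\);
\item \(\delta(F_j)\ge P(F_j)/(4\pi)-1\gtrsim P(F_j)\) for large \(P\);
\item \(\log(\delta+\delta^{-1})\ge\log\delta\to\infty\).
\end{itemize}
Combining them gives
\[
\cQstar(F_j)\gtrsim \frac{P(F_j)^4\log P(F_j)}{D_j^2}\gtrsim \log D_j\to\infty,
\]
which contradicts minimality. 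So the \(F_j\) lie in a fixed ball. By the Blaschke selection theorem, a subsequence converges in Hausdorff distance to a compact convex set \(F\). Volume is continuous under Hausdorff convergence of convex bodies, so \(|F|=4\pi/3\); hence \(F\) has nonempty interior and \(F\in\cA\).

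\textbf{Step 2: excluding the ball (main obstacle).} The limit \(F\) might be a ball, in which case \(\lambda_*(F_j)\to0\). We rule this out with Theorem~\ref{thm:rough-local-gap}: if \(\lambda_*(F_j)\to0\), then \(\liminf_j\cQstar(F_j)\ge \tfrac{64}{3}\pi^3>16\pi^3>m\), which is impossible for a minimizing sequence. For this we need the continuity of \(\lambda_*\) under Hausdorff convergence. The map \(\lambda_*\) is \(1\)-Lipschitz with respect to \(d_H\), since \(|d_H(A,B+x)-d_H(A',B+x)|\le d_H(A,A')\) and taking the infimum over \(x\) preserves this. Therefore \(\lambda_*(F)=\lim_j\lambda_*(F_j)>0\), and \(F\) is not a ball. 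All the genuine analytical difficulty of the degenerate regime is already contained in Theorem~\ref{thm:rough-local-gap}; here we only use the strict inequality between the local gap and the competitor value.

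\textbf{Step 3: lower semicontinuity.} Perimeter is continuous under Hausdorff convergence of convex bodies, for example through Cauchy's formula in terms of support functions. Hence \(\delta(F_j)\to\delta(F)\) and \(P(F_j)\to P(F)\). Since \(F\) is not a ball, \(\delta(F)>0\), and \(t\mapsto t\log(t+1/t)\) is continuous on \((0,\infty)\). Together with \(\lambda_*(F_j)\to\lambda_*(F)>0\), this gives \(\cQstar(F_j)\to\cQstar(F)\). Therefore \(\cQstar(F)=m\), and \(F\) is a minimizer.
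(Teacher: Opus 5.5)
Your proof is correct and follows the paper's argument. Coercivity comes from \(P\gtrsim\operatorname{diam}^{1/2}\) together with \(\lambda_*\lesssim\operatorname{diam}\); then Blaschke selection gives a limit, the ball is excluded because the local gap of Theorem~\ref{thm:rough-local-gap} exceeds the competitor value below \(16\pi^3\), and \(\cQstar\) is continuous away from balls. The only differences are cosmetic: you normalize by the barycenter and bound the diameter directly, whereas the paper translates to an optimal center and bounds \(\lambda_*(F_j)\), and it proves \(P\gtrsim\operatorname{diam}^{1/2}\) by Schwarz symmetrization where you only sketch it.
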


\begin{proof}
Let \(F\in\cA\) be the competitor constructed in
Proposition~\ref{prop:competitor-below-gap}. Let \((F_j)_j\subset\cA\) be a
minimizing sequence. Then
\[
        \cQstar(F_j)\to\inf_{E\in\cA}\cQstar(E)
        \le \cQstar(F)<16\pi^3,
\]
and in particular
\begin{equation}\label{eq:Qstar-bounded-sequence}
        \sup_j \cQstar(F_j)<+\infty.
\end{equation}

Set \(\lambda_j:=\lambda_*(F_j)\). Up to translations, choose
\(x_j\in\R^3\) realizing the minimum in the definition of \(\lambda_*\), and
set \(K_j:=F_j-x_j\). Thus
$
        d_H(K_j,B)=\lambda_j$ and $\cQstar(K_j)=\cQstar(F_j).
$
We first show that \((\lambda_j)_j\) is bounded. Suppose, by contradiction,
that \(\lambda_j\to+\infty\) along a subsequence. If \(\lambda_j>1\), then
$
        \operatorname{diam}(K_j)\ge \lambda_j.
$
Indeed, for any point \(p_j\in K_j\), since
\(K_j\subset B_{\operatorname{diam}(K_j)}(p_j)\) and every point of
\(B+p_j\) is at distance at most \(1\) from \(K_j\), one has
$
        d_H(K_j,B+p_j)
        \le \max\{\operatorname{diam}(K_j),1\}.
$
The definition of \(\lambda_*\) then gives the claim.

We now recall that, as a consequence of Schwarz symmetrization, every
convex body \(K\subset\R^3\) with \(|K|=4\pi/3\) satisfies
\begin{equation}\label{eq:diameter-perimeter-lower-bound}
        P(K)\ge \frac{2\pi}{\sqrt3}\,\operatorname{diam}(K)^{1/2}.
\end{equation}
Indeed, let \(\ell=\operatorname{diam}(K)\) and \(R\) be the
concave profile of the Schwarz symmetral of \(K\) with respect to a
diametral axis. Schwarz symmetrization does not increase perimeter, and
\(\displaystyle \int_0^\ell R(z)^2\,dz=\frac 43\) (see Baernstein~\cite[Ch.~2]{Baernstein2019} for the standard properties of
Schwarz symmetrization). Hence
\(\max R\ge \sqrt{4/(3\ell)}\), and the concavity of \(R\) gives
\(\displaystyle \int_0^\ell R(z)\,dz\ge \sqrt{\ell/3}\). Therefore
\[
        P(K) \geq P(K^\sharp)
        \ge 2\pi\int_0^\ell R(z)\,dz
        \ge \frac{2\pi}{\sqrt3}\,\ell^{1/2};
\]
Applying \eqref{eq:diameter-perimeter-lower-bound} to \(K_j\), we get
\[
        P(K_j)\ge c\lambda_j^{1/2},
        \qquad
        \delta(K_j)\ge c\lambda_j^{1/2}
\]
for \(j\) large, with \(c>0\). Hence
\[
        \log\bigl(\delta(K_j)+\delta(K_j)^{-1}\bigr)
        \ge c\log\lambda_j,
\]
and consequently
\[
\begin{aligned}
        \cQstar(F_j)
        &=
        \frac{\delta(K_j)
        \log\bigl(\delta(K_j)+\delta(K_j)^{-1}\bigr)P(K_j)^3}
        {\lambda_j^2}                                      \\
        &\ge
        c\frac{\lambda_j^{1/2}\log\lambda_j\,\lambda_j^{3/2}}
        {\lambda_j^2}
        =c\log\lambda_j\to+\infty.
\end{aligned}
\]
This contradicts~\eqref{eq:Qstar-bounded-sequence}. Therefore
\((\lambda_j)_j\) is bounded and consequently \((K_j)_j\) is uniformly bounded. By
Blaschke's selection theorem and the continuity of volume under Hausdorff
convergence in the convex class, up to a subsequence
$
        K_j\to F_*
$
in Hausdorff distance for some \(F_*\in\cA\).
We claim that \(F_*\) is not a ball. If \(F_*\) were a ball, then
\(\lambda_*(K_j)=\lambda_*(F_j)\to0\). The local gap of
Theorem~\ref{thm:rough-local-gap} would imply
\[
        \liminf_{j\to\infty}\cQstar(F_j)
        \ge \frac{64}{3}\pi^3.
\]
On the other hand, by the minimizing property and by
Proposition~\ref{prop:competitor-below-gap},
\[
        \lim_{j\to\infty}\cQstar(F_j)
        =\inf_{E\in\cA}\cQstar(E)
        \le \cQstar(F)<16\pi^3<\frac{64}{3}\pi^3,
\]
a contradiction.

Since \(F_*\) is not a ball, the standard continuity of intrinsic volumes
under Hausdorff convergence of convex bodies, together with the Lipschitz
continuity of \(\lambda_*\) with respect to the Hausdorff distance, gives
$
        \cQstar(K_j)\to\cQstar(F_*)
$
(see Schneider~\cite[Sec.~4.2]{Schneider2014}). Since
\(\cQstar(K_j)=\cQstar(F_j)\), we conclude that
\[
        \cQstar(F_*)
        =\lim_{j\to\infty}\cQstar(F_j)
        =\inf_{E\in\cA}\cQstar(E).
\]
Thus \(F_*\) is a minimizer.
\end{proof}

\section{Proof of the main lower bound}
In this section, we will give the detailed proof of Theorem~\ref{thm:rough-local-gap}. We begin with two elementary geometric facts which will be used later in the
local analysis near the unit ball. Before stating them, we fix the notation
for balls. If \(x_0\in\R^3\) and \(\rho>0\), we write
\[
        B_\rho(x_0):=\{x\in\R^3:\ |x-x_0|<\rho\}.
\]
When the center is not displayed, the ball is centered at the origin:
\[
        B_\rho:=B_\rho(0).
\]
In particular, throughout the paper \(B=B_1(0)\) denotes the unit ball
centered at the origin.

The first lemma says that, as long as two convex bodies remain uniformly
away from degeneracy and infinity, small Hausdorff distance implies small
uniform distance between their radial functions. This is the elementary
geometric estimate which allows us to pass from Hausdorff control to radial
control.

\begin{lemma}
\label{lem:uniform-radial-stability}
Let \(0<r<R\). There exists a constant \(C=C(r,R)>0\) with the
following property. If \(K,L\subset\R^3\) are convex bodies satisfying
\[
 B_r\subset K,L\subset B_R,
\]
then their radial functions with respect to the origin satisfy
\[
 \|\rho_K-\rho_L\|_{L^\infty(S^2)}
 \le C d_H(K,L).
\]
\end{lemma}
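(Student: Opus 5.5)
Write $d:=d_H(K,L)$. The plan is to compare the radial functions direction by direction. Fix $u\in S^2$ and assume without loss of generality that $\rho_K(u)\ge\rho_L(u)$. The boundary point $p:=\rho_K(u)u\in\partial K$ then lies outside the interior of $L$, or on its boundary.

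By definition of the Hausdorff distance, there is a point $q\in L$ with $|p-q|\le d$. The key step is to show that the radial gap $\rho_K(u)-\rho_L(u)$ is controlled by $d$. For this, use a supporting plane of $L$ at $\rho_L(u)u$, with outer unit normal $\nu$. We have $h_L(\nu)=\rho_L(u)\,u\cdot\nu$. Since $q\in L$,
\[
 \rho_K(u)\,u\cdot\nu = p\cdot\nu \le q\cdot\nu + d \le h_L(\nu)+d = \rho_L(u)\,u\cdot\nu+d,
\]
and therefore
\[
 \rho_K(u)-\rho_L(u)\le \frac{d}{u\cdot\nu}.
\]

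What remains is a uniform lower bound on the angle factor $u\cdot\nu$. Because $B_r\subset L$, we have $h_L(\nu)\ge r$. Because $\rho_L(u)\le R$, we get
\[
 u\cdot\nu=\frac{h_L(\nu)}{\rho_L(u)}\ge \frac rR .
\]
Hence $\rho_K(u)-\rho_L(u)\le (R/r)\,d$. Exchanging the roles of $K$ and $L$ and taking the supremum over $u\in S^2$ gives the statement with $C=R/r$.

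The only point needing care, and the main obstacle, is justifying the supporting-plane step. A supporting plane exists at every boundary point of a convex body, and we may pick any normal in the normal cone. The bound $u\cdot\nu\ge r/R>0$ then holds for every such normal, so nonsmooth boundary points cause no trouble. Note that this bound also ensures $u\cdot\nu>0$, so dividing by it is legitimate.
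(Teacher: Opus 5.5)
Your proof is correct: the supporting-plane inequality together with $u\cdot\nu=h_L(\nu)/\rho_L(u)\ge r/R$ gives the explicit constant $C=R/r$. It follows a different route from the paper.

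The paper's proof has three parts:
\begin{itemize}
\item It first proves a uniform Lipschitz bound for radial functions in the class $B_r\subset L\subset B_R$. It does this by polarity: $\rho_L=1/h_{L^\circ}$, with $B_{1/R}\subset L^\circ\subset B_{1/r}$.
\item It then takes $y\in L$ within distance $d$ of $\rho_K(\omega)\omega$, writes $y=|y|\eta$ with $|\eta-\omega|\le C(r)\,d$, and transfers $|y|\le\rho_L(\eta)$ back to the direction $\omega$ using the Lipschitz bound.
\item The case $d>r/2$ is handled separately as trivial.
\end{itemize}

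You never leave the ray. A single supporting plane of $L$ at $\rho_L(u)u$ turns Hausdorff closeness directly into a bound on the radial gap. The transversality estimate $u\cdot\nu\ge r/R$ is essentially the geometric fact behind the Lipschitz bound for radial functions, and you use it directly. Your version is shorter, avoids polar bodies and the case split, and makes the constant explicit.

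What the paper's route buys is the uniform Lipschitz regularity of radial functions in this class as an intermediate statement. The paper uses it only inside this proof, though. Its later inward-propagation lemma relies on a supporting-plane computation very close to yours.

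One small remark: your observation that $p$ lies outside the interior of $L$ is never used and can be dropped.
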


\begin{proof}
Set \(\delta:=d_H(K,L)\). The Hausdorff inclusions give
\[
 K\subset L+\delta B,
 \qquad
 L\subset K+\delta B.
\]
We first recall a uniform Lipschitz bound for radial functions in this
class. We denote by \(L^\circ\) the polar body of \(L\), namely
\[
        L^\circ
        :=
        \{z\in\R^3:\ z\cdot x\le 1 \text{ for every } x\in L\}.
\]
Since \(B_r\subset L\subset B_R\), taking polars reverses the inclusions and
gives
$
        B_{1/R}\subset L^\circ\subset B_{1/r}.
$
Moreover, for every \(\omega\in S^2\),
\[
        \rho_L(\omega)=\frac{1}{h_{L^\circ}(\omega)}.
\]
The support function \(h_{L^\circ}\) is Lipschitz on \(S^2\), with
Lipschitz constant bounded by \(1/r\), and satisfies
\(h_{L^\circ}\ge 1/R\). Hence \(\rho_L\) is Lipschitz on \(S^2\), with a
constant depending only on \(r\) and \(R\). The same bound holds for
\(\rho_K\).

Fix \(\omega\in S^2\), and set \(s:=\rho_K(\omega)\). Then
\(s\omega\in K\subset L+\delta B\). Hence there exists \(y\in L\) such
that \(|s\omega-y|\le\delta\). If \(\delta>r/2\), then the desired bound is
trivial after increasing \(C(r,R)\), since both radial functions take their
values in \([r,R]\). We may therefore assume \(\delta\le r/2\). Writing
\(y=|y|\eta\), we have \(|s-|y||\le\delta\) and \(|y|\ge s-\delta\ge r/2\).
Consequently,
\[
 |\eta-\omega|
 =\left|\frac{y}{|y|}-\omega\right|
 \le C(r)\delta.
\]
Since \(y=|y|\eta\in L\), by the definition of the radial function we have
$
        |y|\le \rho_L(\eta).
$
Using the uniform Lipschitz bound for \(\rho_L\), we therefore get
\[
\begin{aligned}
        \rho_L(\omega)
        &\ge \rho_L(\eta)-C(r,R)|\eta-\omega|  \\
        &\ge |y|-C(r,R)\delta                  \\
        &\ge s-C(r,R)\delta.
\end{aligned}
\]
Thus \(\rho_K(\omega)\le \rho_L(\omega)+C(r,R)\delta\). Interchanging
\(K\) and \(L\) gives the opposite inequality. Taking the supremum over
\(S^2\) proves the claim.
\end{proof}

We next prove a simple recentering lemma. If a convex body is close to the
unit ball, we can translate it by a vector controlled by the Hausdorff distance
so that the radial perturbation has zero first moment. This gives a convenient normalization and keeps the estimates in terms of the
translation-invariant distance \(\lambda_*\).

\begin{lemma}
\label{lem:first-moment-recentering}
There exist \(\eps_0>0\) and \(C>0\) with the following property. Let
\(K\) be a convex body such that \(|K|=4\pi/3\) and
\(d_H(K,B)<\eps_0\). Then there exists a vector \(a(K)\in\R^3\), with
\(|a(K)|\le C d_H(K,B)\), such that, writing \(K^0:=K-a(K)\) and
\(\partial K^0=\{(1+u(\omega))\omega:\omega\in S^2\}\), one has
\begin{equation}\label{eq:first-moment-zero}
 \int_{S^2} u(\omega)\,\omega\,d\sigma(\omega)=0.
\end{equation}
Moreover
\[
 d_H(K^0,B)\le C d_H(K,B).
\]
In particular, \(d_H(K^0,B)\to0\) whenever \(d_H(K,B)\to0\).
\end{lemma}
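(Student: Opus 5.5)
We prove Lemma~\ref{lem:first-moment-recentering} by applying a quantitative inverse function theorem, or simply a degree/fixed point argument, to the map sending a translation vector to the first moment of the corresponding radial perturbation. Set $\eta:=d_H(K,B)$. For $a\in\R^3$ with $|a|\le 1/4$, the body $K-a$ satisfies $B_{1/2}\subset K-a\subset B_{2}$ once $\eta$ is small. Indeed, $K\supset B_{1-\eta}$ by convexity and the Hausdorff inclusion $B\subset K+\eta B$; this holds since a convex body whose $\eta$-neighbourhood contains $B$ contains $B_{1-\eta}$, by the support function comparison $h_K\ge 1-\eta$. So $K-a$ has a well-defined radial function $\rho_a:=\rho_{K-a}$ with respect to the origin. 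Write $u_a:=\rho_a-1$ and define
\[
 \Phi(a):=\int_{S^2}u_a(\omega)\,\omega\,d\sigma(\omega)\in\R^3 .
\]
We seek a zero of $\Phi$ with $|a|\le C\eta$.

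The key comparison is with the translated unit ball. The radial function of $B-a$ is $\rho_{B-a}(\omega)=-a\cdot\omega+\sqrt{1-|a|^2+(a\cdot\omega)^2}=1-a\cdot\omega+O(|a|^2)$. By Lemma~\ref{lem:uniform-radial-stability} with $r=1/2$, $R=2$, and since $d_H(K-a,B-a)=\eta$, we get $\|\rho_a-\rho_{B-a}\|_{L^\infty}\le C\eta$. Therefore
\[
 \Phi(a)=-\int_{S^2}(a\cdot\omega)\,\omega\,d\sigma+E(a)=-\frac{4\pi}{3}\,a+E(a),\qquad |E(a)|\le C(\eta+|a|^2).
\]
Moreover, $\Phi$ is continuous in $a$, because $a\mapsto K-a$ is Hausdorff continuous and Lemma~\ref{lem:uniform-radial-stability} again controls the radial functions uniformly.

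Now consider the continuous map $T(a):=\frac{3}{4\pi}E(a)$ on the closed ball $\overline{B_{M\eta}}$, with $M:=2\cdot\frac{3}{4\pi}C$. For $\eta<\eps_0$ small we have $|T(a)|\le \frac{3}{4\pi}C(\eta+M^2\eta^2)\le M\eta$. Hence Brouwer's theorem gives a fixed point $a=T(a)$, which is exactly $\Phi(a)=0$, with $|a|\le M\eta$. Taking $a(K):=a$ yields \eqref{eq:first-moment-zero}. The final estimate follows from the triangle inequality, $d_H(K^0,B)\le d_H(K-a,B-a)+d_H(B-a,B)=\eta+|a|\le(1+M)\eta$. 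This also ensures that $\partial K^0$ is a radial graph over $S^2$ as required.

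The only delicate point is the uniform containment $B_{1/2}\subset K-a\subset B_2$ needed to invoke Lemma~\ref{lem:uniform-radial-stability} with fixed constants, together with the continuity of $\Phi$. Both follow from smallness of $\eta$ and $|a|$. The volume constraint $|K|=4\pi/3$ is not really used here.
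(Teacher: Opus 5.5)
Your proof is correct and is essentially the paper's argument: the same map $\Phi(a)=\int_{S^2}(\rho_{K-a}-1)\,\omega\,d\sigma$, the same expansion $\Phi(a)=-\frac{4\pi}{3}a+O(\varepsilon+|a|^2)$ obtained by comparing with $B-a$ through Lemma~\ref{lem:uniform-radial-stability}, and Brouwer's theorem. The only difference is cosmetic. You apply Brouwer directly to the self-map $a\mapsto\frac{3}{4\pi}E(a)$ of $\overline{B_{M\eta}}$, whereas the paper first proves $\Phi(a)\cdot a<0$ on $|a|=R\varepsilon$ and then argues by contradiction using $a\mapsto R\varepsilon\,\Phi(a)/|\Phi(a)|$; your version is slightly more direct.
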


\begin{remark}
The vector \(a(K)\) is not claimed to be unique and is not required to
minimize the distance from \(K\) to a translated unit ball. Its only role is
to fix a local gauge for nearly spherical sets: after translating by
\(a(K)\), the radial perturbation has zero first spherical moment. This is
the normalization used later to remove the degree-one spherical harmonics.
\end{remark}

\begin{proof}
Set \(\eps:=d_H(K,B)\). If \(\eps=0\), then \(K=B\) and the claim follows
with \(a(K)=0\). We henceforth assume \(\eps>0\). We also assume throughout
that \(\eps_0>0\) is small. For \(|a|\le 1/4\), the translated body \(K-a\)
contains the origin in its interior. Indeed, since \(B_{1-\eps}\subset K\),
choosing \(\eps_0<1/4\) gives
\[
 B_{1-\eps-|a|}\subset K-a\subset B_{1+\eps+|a|}.
\]
Thus, for \(|a|\le1/4\), all these translated bodies are trapped between
two fixed balls centered at the origin. In particular, the radial function
of \(K-a\) with respect to the origin is well-defined. Define
\[
 \Phi_K(a)
 :=\int_{S^2}\bigl(\rho_{K-a}(\omega)-1\bigr)\omega\,d\sigma(\omega).
\]
The map \(a\mapsto\Phi_K(a)\) is continuous, in fact locally Lipschitz. If
\(|a|,|b|\le1/4\), then
$
 d_H(K-a,K-b)=|a-b|,
$
and Lemma~\ref{lem:uniform-radial-stability}, applied with fixed inner and
outer radii, gives
\[
 \|\rho_{K-a}-\rho_{K-b}\|_{L^\infty(S^2)}\le C|a-b|.
\]
This implies the asserted continuity of \(\Phi_K\).

We first compute the map \(\Phi_K\) in the case \(K=B\). The radial function of \(B-a\)
is determined by
$
 |r\omega+a|^2=1.
$
Thus, for \(|a|<1\),
\[
 \rho_{B-a}(\omega)
 =-a\cdot\omega+
 \sqrt{1-|a|^2+(a\cdot\omega)^2}.
\]
Consequently, uniformly for \(\omega\in S^2\),
$\rho_{B-a}(\omega)-1
 =-a\cdot\omega+O(|a|^2).
$
Using
\[
 \int_{S^2}\omega_i\omega_j\,d\sigma
 =\frac{4\pi}{3}\delta_{ij},
\]
we obtain
\begin{equation}\label{eq:Phi-ball-expansion}
 \Phi_B(a)
 =-\int_{S^2}(a\cdot\omega)\omega\,d\sigma+O(|a|^2)
 =-\frac{4\pi}{3}a+O(|a|^2).
\end{equation}
We now compare \(K-a\) with \(B-a\). We have
$
        d_H(K-a,B-a)=d_H(K,B)=\varepsilon.
$
Moreover,
\[
        B_{1-\varepsilon-|a|}\subset K-a\subset B_{1+\varepsilon+|a|},
        \qquad
        B_{1-|a|}\subset B-a\subset B_{1+|a|}.
\]
Thus, if \(\varepsilon_0<1/4\) and \(|a|\le 1/4\), then
$
        B_{1/2}\subset K-a,\ B-a\subset B_{3/2}.
$
 Lemma~\ref{lem:uniform-radial-stability} gives
$
        \|\rho_{K-a}-\rho_{B-a}\|_{L^\infty(S^2)}
        \le C\varepsilon.
$
Therefore
\begin{equation}\label{eq:Phi-comparison}
 |\Phi_K(a)-\Phi_B(a)|\le C\eps.
\end{equation}
Combining \eqref{eq:Phi-ball-expansion} and \eqref{eq:Phi-comparison},
we get
\[
 \Phi_K(a)\cdot a
 \le
 -\frac{4\pi}{3}|a|^2+C|a|^3+C\eps |a|.
\]
We choose \(R>0\) large. After \(R\) has been fixed, we further decrease \(\varepsilon_0\), if necessary,
so that \(R\varepsilon_0\le 1/4\). Thus the map \(\Phi_K\) is well defined
on the whole ball \(\overline{B_{R\varepsilon}}\), and all the preceding estimates
hold uniformly there. If \(|a|=R\eps\), then
\[
 \Phi_K(a)\cdot a
 \le
 -\frac{4\pi}{3}R^2\eps^2+CR^3\eps^3+CR\eps^2.
\]
Taking first \(R\) large enough and then \(\eps_0\) small enough, we obtain
\begin{equation}\label{eq:Phi-inward-boundary}
 \Phi_K(a)\cdot a<0
 \qquad\text{for every } |a|=R\eps.
\end{equation}
We now prove that \(\Phi_K\) has a zero in \(B_{R\varepsilon}\). Assume by contradiction
that
\[
        \Phi_K(a)\neq 0
        \qquad\text{for every }a\in \overline{B_{R\varepsilon}}.
\]
Then the map
\[
        T(a)
        :=
        R\varepsilon\,\frac{\Phi_K(a)}{|\Phi_K(a)|}
\]
is continuous from \(\overline{B_{R\varepsilon}}\) into itself. By Brouwer's fixed point
theorem, there exists \(a_0\in \overline{B_{R\varepsilon}}\) such that \(T(a_0)=a_0\).
Since \(|T(a_0)|=R\varepsilon\), we have \(|a_0|=R\varepsilon\). Moreover,
$
        a_0
        =
        R\varepsilon\,\frac{\Phi_K(a_0)}{|\Phi_K(a_0)|},
$
and hence
$
        \Phi_K(a_0)\cdot a_0
        =
        |\Phi_K(a_0)|\,|a_0|
        >0.
$
This contradicts the boundary inequality
\[
        \Phi_K(a)\cdot a<0
        \qquad\text{for every } |a|=R\varepsilon.
\]
Therefore \(\Phi_K\) has a zero in \(B_{R\varepsilon}\).
Thus there exists \(a(K)\in\R^3\) such that
\[
 |a(K)|\le R\eps,
 \qquad
 \Phi_K(a(K))=0.
\]
Writing \(K^0:=K-a(K)\) and
\(\partial K^0=\{(1+u(\omega))\omega:\omega\in S^2\}\), the identity
\(\Phi_K(a(K))=0\) is precisely
\[
 \int_{S^2}u(\omega)\omega\,d\sigma(\omega)=0.
\]
Finally,
\[
 d_H(K^0,B)
 \le d_H(K,B)+|a(K)|
 \le (1+R)d_H(K,B).
\]
This proves the lemma.
\end{proof}

We shall also use the following elementary consequence of the definition of
the Hausdorff distance for convex bodies. It follows from standard facts on
support functions and the Hausdorff metric; see Schneider~\cite{Schneider2014}.
\begin{lemma}
\label{lem:hausdorff-support-radial}
Let \(K\subset\R^3\) be a convex body with \(0\in\mathrm{int}(K)\), and let
\(h_K\) and \(\rho_K\) denote its support and radial functions,
respectively. If
\[
 d_H(K,B)=\lambda<1,
\]
then
\[
 \|h_K-1\|_{L^\infty(S^2)}=\lambda
\]
and
\[
 B_{1-\lambda}\subset K\subset B_{1+\lambda}.
\]
Consequently,
\[
 1-\lambda\le \rho_K(\omega)\le 1+\lambda
 \qquad\text{for every }\omega\in S^2,
\]
and, writing \(u:=\rho_K-1\),
\[
 \|u\|_{L^\infty(S^2)}= \lambda.
\]
\end{lemma}

The next two lemmas are companion estimates for sequences of convex bodies
converging to the unit ball in the Hausdorff distance. They show that an
almost extremal value of the radial perturbation
$
        u_j:=\rho_{K_j}-1
$
cannot remain isolated at a single point of the sphere: by convexity, it
propagates on a geodesic disk of radius of order \(\lambda_j\).

By
Lemma~\ref{lem:hausdorff-support-radial}, if
$
        d_H(K_j,B)=\lambda_j\to0,
$
then
\[
        \|u_j\|_{L^\infty(S^2)}=\lambda_j,
        \qquad
        B_{1-\lambda_j}\subset K_j\subset B_{1+\lambda_j}.
\]
Thus the hypotheses of Lemmas~\ref{lem:linear-outward}
and~\ref{lem:linear-inward} simply correspond to choosing points where the
radial perturbation is asymptotically extremal, either in the outward or in
the inward direction. We include the proofs for the reader's convenience.

We first record a simple convexity consequence. It is deliberately
formulated with a free height fraction \(\alpha\in(0,1)\), since this is
what improves the final constant.

\begin{lemma}
\label{lem:linear-outward}
Let \(K_j\subset\R^3\) be convex bodies such that
$
 d_H(K_j,B)=\lambda_j\to0.
$
Assume that, for some \(p_j\in S^2\),
$
 \rho_{K_j}(p_j)=1+\lambda_j+o(\lambda_j),
$
where \(\rho_{K_j}\) is the radial function of \(K_j\). Then for every
\(\alpha\in(0,1)\) there exist \(c_\alpha>0\) and \(j_\alpha\) such that,
for \(j\ge j_\alpha\),
\[
 \rho_{K_j}(\omega)\ge 1+\alpha\lambda_j+o(\lambda_j)
\]
uniformly for \(\omega\in D_{c_\alpha\lambda_j}(p_j)\).
\end{lemma}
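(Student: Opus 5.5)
The idea is to use convexity through a cone construction: $K_j$ contains the convex hull of the far point $x_j:=\rho_{K_j}(p_j)p_j$ and the inner ball $B_{1-\lambda_j}$. This inclusion is available by Lemma~\ref{lem:hausdorff-support-radial}, which gives $B_{1-\lambda_j}\subset K_j\subset B_{1+\lambda_j}$ (the origin lies in the interior for $j$ large). For every $\omega$ we then have $\rho_{K_j}(\omega)\ge \rho_{C_j}(\omega)$, where
\[
C_j:=\mathrm{conv}\bigl(\{x_j\}\cup B_{1-\lambda_j}\bigr).
\]
So it suffices to prove the lower bound for the radial function of this explicit "ice-cream cone" body, which is rotationally symmetric about the axis $p_j$.

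Write $R_j:=|x_j|=1+\lambda_j+o(\lambda_j)$ and $r_j:=1-\lambda_j$. I compute $\rho_{C_j}(\omega)$ for $\omega$ at geodesic angle $\theta$ from $p_j$, in the regime $\theta=t\lambda_j$ with $t$ bounded. The cone from $x_j$ is tangent to $B_{r_j}$ at angle $\theta_T$ with $\cos\theta_T=r_j/R_j=1-2\lambda_j+o(\lambda_j)$. Hence $\theta_T\simeq 2\sqrt{\lambda_j}$, which is much larger than $\lambda_j$, so the rays under consideration hit the lateral cone surface. On the segment from $x_j$ to the tangency point, the radial distance behaves like that of a line through $x_j$ making an angle near $\pi/2$ with $p_j$. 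Concretely, the tangent line satisfies $\rho(\theta)\cos(\theta-\theta_T)\,\cdot$ (distance form) $=r_j$, i.e.
\[
\rho_{C_j}(\theta)=\frac{r_j}{\cos(\theta_T-\theta)} .
\]
Expanding gives
\[
\rho_{C_j}(\theta)=r_j\Bigl(1+\tfrac12(\theta_T-\theta)^2+\dots\Bigr)=r_j+\tfrac12 r_j\theta_T^2-r_j\theta_T\theta+O(\theta^2).
\]
With $\tfrac12\theta_T^2\simeq 2\lambda_j$ this becomes $1+\lambda_j-\theta_T\theta+o(\lambda_j)$. Since $\theta_T\theta\simeq 2\sqrt{\lambda_j}\,t\lambda_j=o(\lambda_j)$, the loss is of lower order. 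So $\rho_{C_j}\ge 1+\lambda_j-o(\lambda_j)$ uniformly for $\theta\le c\lambda_j$, and in fact even for $\theta\le c\lambda_j^{1/2}$ with a constant related to $1-\alpha$. In particular, any $c_\alpha>0$ works (say $c_\alpha=1$), and we get $\rho_{K_j}\ge 1+\alpha\lambda_j+o(\lambda_j)$ on $D_{c_\alpha\lambda_j}(p_j)$ for $j\ge j_\alpha$.

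The main technical point is to make the expansion uniform and rigorous without messy trigonometry. I would first try an elementary estimate instead: for $\omega$ in the disk, take the point $y$ on the segment $[x_j,z]$ with $z:=r_j\omega'$, where $\omega'$ is chosen on the tangency circle in the plane of $p_j,\omega$. I then bound $|y|$ from below via $y\cdot\omega$. Since $\rho_{K_j}(\omega)\ge \rho_{C_j}(\omega)\ge y\cdot\omega$ for any $y\in C_j$ on the ray, this needs only first-order Taylor bounds on $\cos$, with all errors of size $O(\lambda_j^{3/2})=o(\lambda_j)$. Care is needed to keep the $o(\lambda_j)$ from the hypothesis uniform, but it enters only through $R_j$, so uniformity in $\omega$ is automatic.
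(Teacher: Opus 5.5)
Your proposal is correct, and it takes a different, sharper route than the paper.

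\textbf{The paper's route.} In each meridian plane the paper uses only the chord $[P_j,Q_j]$. It joins the far point $P_j=\rho_{K_j}(p_j)p_j$ to the inner-ball point $Q_j=(1-\lambda_j)q_j$ with $q_j\perp p_j$. The radial coordinate on this chord is $A_jB_j/(B_j\cos\theta+A_j\sin\theta)$, and it decreases at rate about $\theta$. So on $D_{c_\alpha\lambda_j}(p_j)$ it yields only $1+(1-c_\alpha)\lambda_j+o(\lambda_j)$, and one is forced to take $c_\alpha<1-\alpha$.

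\textbf{Your route.} You use the whole hull $\mathrm{conv}(\{x_j\}\cup B_{1-\lambda_j})$. The paper's chord lies inside it, so the paper's bound is a weaker special case of yours. The generator of your cone through $x_j$ touches $\partial B_{1-\lambda_j}$ at angle $\theta_T\simeq 2\sqrt{\lambda_j}$, so near $x_j$ it is almost orthogonal to $p_j$. The exact identity $r_j/\cos\theta_T=R_j$ and the inequality $\cos(\theta_T-\theta)\le\cos\theta_T+\theta\sin\theta_T$ give $\rho_{K_j}\ge R_j/(1+\theta\tan\theta_T)=1+\lambda_j-O(\lambda_j^{3/2})$ for $\theta\le c\lambda_j$. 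This holds uniformly and for any fixed $c$, which is the clean version of your Taylor expansion.

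\textbf{What each buys.} The paper's argument is a single explicit formula with no tangency computation. Yours costs a little more trigonometry but proves more than the lemma asks. First, the full height $\lambda_j$ (up to $o(\lambda_j)$) persists on every $D_{c\lambda_j}(p_j)$, so the trade-off between $\alpha$ and $c_\alpha$ disappears. Second, the height $\alpha\lambda_j$ persists on disks of radius $c_\alpha\sqrt{\lambda_j}$ whenever $c_\alpha\le 2-\sqrt{2(1+\alpha)}$. In Lemma~\ref{lem:one-disk-logarithmic-estimate} such disks would give $|\log a_j|\approx\tfrac12|\log\lambda_j|$ in the outward case, rather than $|\log\lambda_j|$.
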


\begin{proof}
Since \(d_H(K_j,B)=\lambda_j\), Lemma~\ref{lem:hausdorff-support-radial}
gives the exact inclusions
$
 B_{1-\lambda_j}\subset K_j\subset B_{1+\lambda_j}.
$
Fix a unit vector \(q_j\perp p_j\).
Then
$
 Q_j:=(1-\lambda_j)q_j\in K_j,
$
and by assumption
$
 P_j:=(1+\lambda_j+o(\lambda_j))p_j\in K_j.
$
By convexity, \(K_j\) contains the segment \([P_j,Q_j]\).

We now work in the two-dimensional plane spanned by \(p_j\) and \(q_j\), with coordinates
\((x,z)\), where \(z\) is the coordinate in the \(p_j\) direction and \(x\) is the
coordinate in the \(q_j\) direction. We set
\[
        A_j:=1+\lambda_j+o(\lambda_j),
        \qquad
        B_j:=1-\lambda_j.
\]
Then the endpoints of the segment are
$
        P_j=(0,A_j),
        Q_j=(B_j,0).
$
Let \(\theta\) be the angle from \(p_j\), and let
$
        \omega(\theta)=\sin\theta\,q_j+\cos\theta\,p_j.
$
The ray \(\{\rho\omega(\theta):\rho\ge0\}\) intersects the line through \(P_j\)
and \(Q_j\) at the radial coordinate
\[
        \rho_j(\theta)
        =
        \frac{A_jB_j}{B_j\cos\theta+A_j\sin\theta}.
\]
The point \(\rho_j(\theta)\omega(\theta)\) belongs to the segment
\([P_j,Q_j]\), hence to \(K_j\). Therefore
$
        \rho_{K_j}(\omega(\theta))\ge \rho_j(\theta).
$
For \(\theta\le c_\alpha\lambda_j\),
\[
        \cos\theta=1+O(\lambda_j^2),
        \qquad
        \sin\theta\le c_\alpha\lambda_j+O(\lambda_j^3).
\]
Hence
\[
\begin{aligned}
        B_j\cos\theta+A_j\sin\theta
        &\le
        (1-\lambda_j)(1+O(\lambda_j^2))
        +
        (1+\lambda_j+o(\lambda_j))(c_\alpha\lambda_j+O(\lambda_j^3)) \\
        &=
        1-(1-c_\alpha)\lambda_j+o(\lambda_j).
\end{aligned}
\]
Moreover,
$
        A_jB_j
        =
        (1+\lambda_j+o(\lambda_j))(1-\lambda_j)
        =
        1+o(\lambda_j).
$
Therefore
\[
        \rho_j(\theta)
        \ge
        \frac{1+o(\lambda_j)}
        {1-(1-c_\alpha)\lambda_j+o(\lambda_j)}
        =
        1+(1-c_\alpha)\lambda_j+o(\lambda_j).
\]
We choose \(c_\alpha>0\) so small that \(1-c_\alpha>\alpha\). Since
\(\rho_{K_j}(\omega(\theta))\ge \rho_j(\theta)\), we obtain
\[
        \rho_{K_j}(\omega(\theta))
        \ge
        1+\alpha\lambda_j+o(\lambda_j)\qquad \text{uniformly for}\ (0\le\theta\le c_\alpha\lambda_j).
\]
The preceding argument was written in one meridian plane. Since \(q_j\)
was arbitrary in \(p_j^\perp\), the same estimate holds in every
meridian plane through \(p_j\), hence on the whole geodesic disk
\(D_{c_\alpha\lambda_j}(p_j)\).
\end{proof}

\begin{lemma}
\label{lem:linear-inward}
Let \(K_j\subset\R^3\) be convex bodies such that
$
 d_H(K_j,B)= \lambda_j\to 0.
$
 Assume that,
for some \(p_j\in S^2\),
$
 \rho_{K_j}(p_j)=1-\lambda_j+o(\lambda_j).
$
Then, for every \(\alpha\in(0,1)\), there exist \(c_\alpha>0\) and
\(j_\alpha\) such that, for \(j\ge j_\alpha\),
\[
 \rho_{K_j}(\omega)\le 1-\alpha\lambda_j+o(\lambda_j)
\]
uniformly for \(\omega\in D_{c_\alpha\lambda_j}(p_j)\).
\end{lemma}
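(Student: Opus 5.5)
The plan is to argue by duality with Lemma~\ref{lem:linear-outward}. There we used a \emph{segment} inside \(K_j\) to push the radial function up. Here I would use a \emph{supporting half-space} of \(K_j\) at the almost-innermost boundary point to push the radial function down. Lemma~\ref{lem:hausdorff-support-radial} gives \(B_{1-\lambda_j}\subset K_j\subset B_{1+\lambda_j}\), so the origin is interior for \(j\) large and \(\rho_{K_j}\) is well defined. Set
\[
x_j:=\rho_{K_j}(p_j)\,p_j=(1-\lambda_j+o(\lambda_j))\,p_j\in\partial K_j .
\]
Choose an outer unit normal \(\nu_j\) of a supporting plane of \(K_j\) at \(x_j\). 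Then
\[
K_j\subset\{y:\ y\cdot\nu_j\le h_j\},\qquad h_j:=h_{K_j}(\nu_j)=x_j\cdot\nu_j .
\]

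The first step is to show that \(\nu_j\) is almost parallel to \(p_j\). Since \(B_{1-\lambda_j}\subset K_j\), we have \(h_j\ge 1-\lambda_j\). Combined with \(h_j=(1-\lambda_j+o(\lambda_j))\,(p_j\cdot\nu_j)\), this gives
\[
\cos\varphi_j:=p_j\cdot\nu_j\ \ge\ \frac{1-\lambda_j}{1-\lambda_j+o(\lambda_j)}=1-o(\lambda_j),
\]
hence \(\varphi_j^2=o(\lambda_j)\). We also have \(h_j\le |x_j|=1-\lambda_j+o(\lambda_j)\).

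The second step bounds the radial function on the disk. For \(\omega\in D_{c\lambda_j}(p_j)\), write \(\theta\) for the geodesic distance from \(\omega\) to \(p_j\), so \(\theta\le c\lambda_j\). By the triangle inequality on \(S^2\), the angle between \(\omega\) and \(\nu_j\) is at most \(\theta+\varphi_j\). Therefore
\[
\omega\cdot\nu_j\ \ge\ \cos(\theta+\varphi_j)\ \ge\ 1-\theta^2-\varphi_j^2\ =\ 1-O(\lambda_j^2)-o(\lambda_j).
\]
Since \(\rho_{K_j}(\omega)\,\omega\in K_j\) lies in the half-space, we get
\[
\rho_{K_j}(\omega)\le \frac{h_j}{\omega\cdot\nu_j}\le \frac{1-\lambda_j+o(\lambda_j)}{1-o(\lambda_j)}=1-\lambda_j+o(\lambda_j),
\]
uniformly on the disk. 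This is stronger than the claim: it holds with \(\alpha\) replaced by \(1\), so any \(c_\alpha>0\), for instance \(c_\alpha=1\), and \(j_\alpha\) large enough to make the origin interior suffice.

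The only delicate point is the first step: the angle \(\varphi_j\) is merely \(o(\sqrt{\lambda_j})\), not \(O(\lambda_j)\). The argument still works because only \(\cos\varphi_j\) enters, and it deviates from \(1\) by \(o(\lambda_j)\). I would also make sure that the \(o(\lambda_j)\) terms are uniform in \(\omega\). They are, since they depend only on \(j\) through \(\rho_{K_j}(p_j)\) and the bound \(\theta\le c\lambda_j\).
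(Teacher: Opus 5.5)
Your proof is correct and follows essentially the paper's argument: a supporting plane at the almost-innermost boundary point $x_j$, the inclusion $B_{1-\lambda_j}\subset K_j$ to obtain $1-\cos\varphi_j=o(\lambda_j)$ (so $\varphi_j=o(\sqrt{\lambda_j})$), and the half-space bound $\rho_{K_j}(\omega)\le h_j/(\omega\cdot\nu_j)$. Like the paper, you get the stronger estimate with $\alpha=1$; the only cosmetic difference is that you bound $\omega\cdot\nu_j$ from below via $\cos(\theta+\varphi_j)\ge 1-\theta^2-\varphi_j^2$, whereas the paper compares $\cos\beta_j$ with $\cos\gamma_j$ by the mean value theorem.
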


\begin{proof}
Since \(B_{1-\lambda_j}\subset K_j\), the radial function satisfies, for all $j$,
$
        \rho_{K_j}(p_j)\ge 1-\lambda_j
$,
and we can write
\[
        s_j:=\rho_{K_j}(p_j)=1-\lambda_j+\eta_j,
        \qquad
        0\le \eta_j=o(\lambda_j).
\]
Let
\[
        x_j:=s_jp_j\in\partial K_j,
\]
and choose, by convexity of \(K_j\), an outer unit normal
\(\nu_j\in S^2\) to a supporting hyperplane of \(K_j\) at \(x_j\). Then
\[
        K_j
        \subset
        \{x\in\mathbb R^3:x\cdot\nu_j\le x_j\cdot\nu_j\}
        =
        \{x\in\mathbb R^3:x\cdot\nu_j\le s_j\,p_j\cdot\nu_j\}.
\]
Applying this inclusion to the point
$
        z=(1-\lambda_j)\nu_j\in B_{1-\lambda_j}\subset K_j
$
gives
\[
        1-\lambda_j\le s_j\,p_j\cdot\nu_j.
\]
Writing \(p_j\cdot\nu_j=\cos\gamma_j\) with \(\gamma_j\in[0,\pi]\), we get
\[
        \cos\gamma_j
        \ge
        \frac{1-\lambda_j}{s_j}
        =
        \frac{1-\lambda_j}{1-\lambda_j+\eta_j}
        =
        1-\frac{\eta_j}{1-\lambda_j+\eta_j}.
\]
Since \(\cos\gamma_j\le 1\), it follows that
\[
        0\le 1-\cos\gamma_j
        \le
        \frac{\eta_j}{1-\lambda_j+\eta_j}
        =
        o(\lambda_j).
\]
In particular, \(\gamma_j\to0\). This implies
\begin{equation}\label{eq:gamma-small-sqrt-lambda}
        \gamma_j=o\!\left(\sqrt{\lambda_j}\right).
\end{equation}
We fix \(c_\alpha>0\). We shall prove the estimate on
\(D_{c_\alpha\lambda_j}(p_j)\), after possibly increasing \(j_\alpha\).
Let now \(\omega\in S^2\) with
\begin{equation}\label{eq:theta-geodesic-smallness}
        \theta_j:=d_{S^2}(\omega,p_j)\le c_\alpha\lambda_j.
\end{equation}
We denote by \(\beta_j\in[0,\pi]\) the geodesic angle between
\(\omega\) and \(\nu_j\), so that
\[
        \omega\cdot\nu_j=\cos\beta_j.
\]
The function \(\nu\mapsto d_{S^2}(\nu,\nu_j)\) is \(1\)-Lipschitz on
\(S^2\), hence
\[
        |\beta_j-\gamma_j|
        =
        \bigl|d_{S^2}(\omega,\nu_j)-d_{S^2}(p_j,\nu_j)\bigr|
        \le
        d_{S^2}(\omega,p_j)
        =
        \theta_j.
\]
Combining \eqref{eq:gamma-small-sqrt-lambda} and \eqref{eq:theta-geodesic-smallness}
we get
$
        \beta_j=o(\sqrt{\lambda_j})
$
uniformly for \(\omega\in D_{c_\alpha\lambda_j}(p_j)\). In particular,
\(\cos\beta_j>0\) for \(j\) large and \(\omega\) in the disk.

Since the supporting plane to \(K_j\) at the point \(s_jp_j\) contains
\(K_j\) in the half-space determined by its outer normal, every point
\(x\in K_j\) satisfies
$
        x\cdot \nu_j \le s_jp_j\cdot \nu_j.
$
Applying this to
$
        x=\rho_{K_j}(\omega)\omega\in K_j,
$
we obtain
$
        \rho_{K_j}(\omega)\,\omega\cdot\nu_j
        \le
        s_j\,p_j\cdot\nu_j.
$
By the definitions of the angles \(\beta_j\) and \(\gamma_j\), this is
equivalent to
$
        \rho_{K_j}(\omega)\cos\beta_j
        \le
        s_j\cos\gamma_j.
$
Therefore
\[
        \rho_{K_j}(\omega)
        \le
        s_j\,\frac{\cos\gamma_j}{\cos\beta_j}.
\]
By the mean value theorem,
$
        |\cos\beta_j-\cos\gamma_j|
        \le
        \sin\xi_j\,|\beta_j-\gamma_j|
$
for some \(\xi_j\) between \(\beta_j\) and \(\gamma_j\). Since
$
        |\beta_j-\gamma_j|\le \theta_j,
$
we have
$
        \beta_j\le \gamma_j+\theta_j,
$
and hence, every \(\xi_j\) between \(\beta_j\) and \(\gamma_j\) satisfies
$
        \xi_j\le \gamma_j+\theta_j.
$
For \(j\) large all these angles are small, and therefore
$
        \sin\xi_j\le \gamma_j+\theta_j.
$
Thus
\[
        |\cos\beta_j-\cos\gamma_j|
        \le
        (\gamma_j+\theta_j)\theta_j
        =
        \theta_j\gamma_j+\theta_j^2.
\]
All the following estimates are uniform for
\(\omega\in D_{c_\alpha\lambda_j}(p_j)\). Since
\[
        \theta_j\le c_\alpha\lambda_j,
        \qquad
        \gamma_j=o(\sqrt{\lambda_j}),
\]
we get
$
        \theta_j\gamma_j+\theta_j^2=o(\lambda_j).
$
Together with
\[
        1-\cos\gamma_j=o(\lambda_j),
        \qquad
        \cos\beta_j=\cos\gamma_j+o(\lambda_j),
\]
this gives
$
        \frac{\cos\gamma_j}{\cos\beta_j}
        =
        1+o(\lambda_j)
$
uniformly for \(\omega\in D_{c_\alpha\lambda_j}(p_j)\).
Therefore
\[
\begin{aligned}
        \rho_{K_j}(\omega)
        &\le
        s_j\bigl(1+o(\lambda_j)\bigr)  \\
        &=
        \bigl(1-\lambda_j+\eta_j\bigr)\bigl(1+o(\lambda_j)\bigr) \\
        &=
        1-\lambda_j+o(\lambda_j),
\end{aligned}
\]
where in the last equality we used \(\eta_j=o(\lambda_j)\). All
\(o(\lambda_j)\)-terms are uniform for
\(\omega\in D_{c_\alpha\lambda_j}(p_j)\). Hence, for every fixed
\(\alpha\in(0,1)\),
\[
        \rho_{K_j}(\omega)
        \le
        1-\alpha\lambda_j+o(\lambda_j)
\]
uniformly for \(\omega\in D_{c_\alpha\lambda_j}(p_j)\), for \(j\) large.
\end{proof}
We now record a direct consequence of the two previous lemmas.
It says that, for sequences of convex bodies converging to the unit ball in
the Hausdorff distance, an almost extremal radial displacement propagates on
a small spherical disk of radius comparable to the Hausdorff distance. This
will be used later in the proof of Theorem~\ref{thm:rough-local-gap}.
\begin{corollary}
\label{cor:one-propagated-disk}
Let \((K_j)_j\) be convex bodies such that \(K_j\to B\) in Hausdorff
distance, and write
\[
        \partial K_j=\{(1+u_j(\omega))\omega:\omega\in S^2\},
        \qquad
        \lambda_j=d_H(K_j,B).
\]
For every \(\alpha\in(0,1)\), there exist points \(p_j\in S^2\), a sign
\(\sigma_j\in\{-1,1\}\), a constant \(c_\alpha>0\) independent of \(j\),
and a sequence \(\eta_j\to 0\), such that
\[
        \sigma_j u_j(\omega)
        \ge
        \alpha\lambda_j(1-\eta_j)
        \qquad\text{for every }\omega\in D_{c_\alpha\lambda_j}(p_j),
\]
for every \(j\) large enough.
\end{corollary}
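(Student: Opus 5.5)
The plan is to reduce the corollary to Lemmas~\ref{lem:linear-outward} and~\ref{lem:linear-inward} by choosing, for each \(j\), a point where \(|u_j|\) is exactly maximal, and letting the sign of \(u_j\) there decide which lemma applies.

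\textbf{Setup.} Since \(K_j\to B\), we have \(\lambda_j\to0\), so for \(j\) large \(\lambda_j<1\). For such \(j\), Lemma~\ref{lem:hausdorff-support-radial} applies: with \(0\in\mathrm{int}(K_j)\), it gives \(\|u_j\|_{L^\infty(S^2)}=\lambda_j\) and \(B_{1-\lambda_j}\subset K_j\subset B_{1+\lambda_j}\). The radial function is continuous on the compact set \(S^2\), so there is \(p_j\in S^2\) with \(|u_j(p_j)|=\lambda_j\). Set \(\sigma_j:=\operatorname{sign}u_j(p_j)\), choosing \(\sigma_j=1\) when \(\lambda_j=0\); that trivial case is handled separately by the same formula. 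Then \(\rho_{K_j}(p_j)=1+\sigma_j\lambda_j\) exactly.

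\textbf{Splitting into subsequences.} Pass to the subsequence \(J_+\) where \(\sigma_j=1\) and the subsequence \(J_-\) where \(\sigma_j=-1\).
\begin{itemize}
\item On \(J_+\) (if infinite), the hypothesis of Lemma~\ref{lem:linear-outward} holds with zero error. For a fixed \(\alpha\), it gives \(c^+_\alpha\) and
\[
u_j\ge\alpha\lambda_j+o(\lambda_j)\quad\text{on } D_{c^+_\alpha\lambda_j}(p_j).
\]
\item On \(J_-\), Lemma~\ref{lem:linear-inward} likewise gives \(c^-_\alpha\) and
\[
-u_j\ge\alpha\lambda_j+o(\lambda_j)\quad\text{on } D_{c^-_\alpha\lambda_j}(p_j).
\]
\end{itemize}
Take \(c_\alpha:=\min(c^+_\alpha,c^-_\alpha)\). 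Shrinking the disk preserves both bounds, so \(c_\alpha\) is independent of \(j\). If one of \(J_\pm\) is finite, it is simply discarded for \(j\) large.

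\textbf{Encoding the error.} The remaining step is to write the \(o(\lambda_j)\) error in the form \(\alpha\lambda_j(1-\eta_j)\). Both lemmas state that the error is uniform over the disk. So on each subsequence there is a sequence \(\epsilon_j\to0\) with
\[
\sigma_ju_j\ge\alpha\lambda_j-\epsilon_j\lambda_j\quad\text{on } D_{c_\alpha\lambda_j}(p_j).
\]
Define \(\eta_j:=\epsilon_j/\alpha\) on each subsequence. Merging the two subsequences gives a single sequence \(\eta_j\to0\) along the full index set. If one prefers \(\eta_j\ge0\), replace it by \(\max(\eta_j,0)\), which only weakens the inequality.

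\textbf{Main obstacle.} The only delicate point is bookkeeping. One must check that the lemmas' \(o(\lambda_j)\) is genuinely uniform on the disk, which is stated in both. One must also check that applying the lemmas along subsequences, rather than full sequences, is legitimate. This is immediate, since the hypotheses concern only the tail of the sequence. No new geometric argument is needed.
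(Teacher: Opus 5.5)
Your proposal is correct and follows the paper's route. The paper presents this corollary as a direct consequence of Lemmas~\ref{lem:linear-outward} and~\ref{lem:linear-inward}, applied at points where $|u_j|$ attains (asymptotically) its maximum $\lambda_j$ from Lemma~\ref{lem:hausdorff-support-radial}, with the sign of $u_j(p_j)$ deciding which lemma is used; your additional bookkeeping ($c_\alpha=\min(c_\alpha^+,c_\alpha^-)$, merging the two subsequences, rewriting the uniform $o(\lambda_j)$ error as $\alpha\lambda_j\eta_j$) only makes explicit what the paper leaves implicit.
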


\begin{remark}
The lower bound in Corollary~\ref{cor:one-propagated-disk} is pointwise on
the disk. In particular, it holds \(d\sigma\)-a.e.\ on the disk, and
therefore it satisfies the hypothesis of
Lemma~\ref{lem:one-disk-logarithmic-estimate} without any further
regularization.
\end{remark}

We shall use the following logarithmic capacitary estimate on \(S^2\).
It gives the energy cost of imposing a height \(h\) on a spherical disk of
geodesic radius \(a\), when the \(L^2\)-norm of the function is small
compared with \(h\). Since \(S^2\) is two-dimensional, the relevant capacity
is logarithmic, of order \(1/|\log a|\); hence the natural energy scale is
$\displaystyle
        \frac{h^2}{|\log a|}.
$
This is the intrinsic two-dimensional analogue of the logarithmic estimate
of Hall, Hayman and Weitsman~\cite[eq.~(4.6)]{HHW1991}, not of their
three-dimensional Newtonian estimate \cite[Thm.~3.1]{HHW1991}.

\begin{lemma}
\label{lem:one-disk-logarithmic-estimate}
Let \(p_j\in \mathbb S^2\) and let \(a_j>0\) be such that
$
        a_j\to0.
$
We denote by
\[
        D_{a_j}(p_j)
        :=
        \{\omega\in\mathbb S^2:\ d_{\mathbb S^2}(\omega,p_j)<a_j\}
\]
the geodesic disk of center \(p_j\) and radius \(a_j\) on \(\mathbb S^2\).
Let \(h_j>0\), and let \(v_j\in H^1(\mathbb S^2)\) satisfy
\[
        v_j \ge h_j
        \qquad\text{for }d\sigma\text{-a.e.\ on }D_{a_j}(p_j).
\]
Assume moreover that
\[
        \|v_j\|_{L^2(\mathbb S^2)} = o(h_j).
\]
Then
\[
        \int_{\mathbb S^2} |\nabla_{\mathbb S^2} v_j|^2\,d\sigma
        \ge
        \frac{2\pi h_j^2}{|\log a_j|}\,(1+o(1)).
\]

In particular, let \(\lambda_j>0\) be such that \(\lambda_j\to0\). If
\[
        h_j=\alpha\lambda_j(1+o(1)),
        \qquad
        a_j=c\lambda_j(1+o(1)),
\]
with \(\alpha,c>0\), then
\[
        \int_{\mathbb S^2} |\nabla_{\mathbb S^2} v_j|^2\,d\sigma
        \ge
        2\pi\alpha^2
        \frac{\lambda_j^2}{|\log\lambda_j|}
        (1+o(1)).
\]
\end{lemma}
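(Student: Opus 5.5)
The plan is to reduce the estimate to a normalized capacitary problem and then bound the Dirichlet energy from below by integrating along geodesic rays issuing from \(p_j\) (one-dimensional Cauchy--Schwarz on each ray). Only the first claim needs work; the second follows by substitution.

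\emph{Step 1: truncation.} Set
\[
 w_j:=\frac{\min\{\max\{v_j,0\},h_j\}}{h_j}.
\]
Truncation does not increase the Dirichlet energy, so \(w_j\in H^1(\mathbb S^2)\) and \(\int|\nabla w_j|^2\le h_j^{-2}\int|\nabla v_j|^2\). Moreover \(0\le w_j\le1\) and \(w_j=1\) a.e.\ on \(D_{a_j}(p_j)\). Since \(0\le w_j\le |v_j|/h_j\), we also have \(\|w_j\|_{L^1}\le C\|w_j\|_{L^2}\le C\|v_j\|_{L^2}/h_j=o(1)\). So it suffices to prove
\[
 \int_{\mathbb S^2}|\nabla w_j|^2\,d\sigma\ge\frac{2\pi}{|\log a_j|}(1+o(1)).
\]

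\emph{Step 2: ray-wise Cauchy--Schwarz.} Use geodesic polar coordinates \((\theta,\varphi)\) centred at \(p_j\), so that \(d\sigma=\sin\theta\,d\theta\,d\varphi\) and \(|\nabla w|^2\ge|\partial_\theta w|^2\). Fix a small \(r_0>0\) independent of \(j\). For a.e.\ \(\varphi\) the map \(\theta\mapsto w_j(\theta,\varphi)\) is absolutely continuous; this is the ACL property of \(H^1\) in these coordinates, away from the poles. It equals \(1\) at \(\theta=a_j\). Hence, for \(r\in[r_0,2r_0]\),
\[
 (1-w_j(r,\varphi))_+^2\le\Bigl(\int_{a_j}^{r}|\partial_\theta w_j|\,d\theta\Bigr)^2\le\int_{a_j}^{2r_0}|\partial_\theta w_j|^2\sin\theta\,d\theta\cdot\int_{a_j}^{2r_0}\frac{d\theta}{\sin\theta}.
\]
The last factor equals \(\log\frac{\tan(r_0)}{\tan(a_j/2)}=|\log a_j|+O_{r_0}(1)\).

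\emph{Step 3: averaging to use smallness.} Average the inequality in \(r\) over \([r_0,2r_0]\) with weight \(\sin r\), and integrate in \(\varphi\in[0,2\pi]\). By Jensen, \(\frac{1}{2\pi}\int_0^{2\pi}\operatorname{avg}_r(1-w_j)^2\,d\varphi\ge(1-m_j)^2\), where \(m_j\) is the weighted mean of \(w_j\) on the annulus \(\{r_0<\theta<2r_0\}\). Since \(\|w_j\|_{L^1}=o(1)\), we get \(m_j\le C(r_0)\|w_j\|_{L^1}=o(1)\). Therefore
\[
 \int|\nabla w_j|^2\ge\frac{2\pi(1-o(1))}{|\log a_j|+O_{r_0}(1)}=\frac{2\pi}{|\log a_j|}(1+o(1)),
\]
because \(r_0\) is fixed and \(|\log a_j|\to\infty\).

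I expect this step to be the main obstacle. It is the only place where the hypothesis \(\|v_j\|_{L^2}=o(h_j)\) enters, and the argument must keep \(r_0\) fixed so that the \(O(1)\) correction in the logarithm is negligible.

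\emph{Step 4: the particular case.} If \(a_j=c\lambda_j(1+o(1))\), then \(|\log a_j|=|\log\lambda_j|(1+o(1))\). Combined with \(h_j^2=\alpha^2\lambda_j^2(1+o(1))\), this gives the stated bound \(2\pi\alpha^2\lambda_j^2/|\log\lambda_j|\,(1+o(1))\).
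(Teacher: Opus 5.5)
Your proof is correct and follows essentially the same route as the paper. Both arguments use geodesic polar coordinates at $p_j$ and a one-dimensional Cauchy--Schwarz inequality with weight $\sin\theta$, which produces $\int d\theta/\sin\theta=|\log a_j|+O(1)$; the $L^2$-smallness is used only to make the function nearly zero at a fixed distance. The differences are cosmetic: the paper applies Cauchy--Schwarz to the circular mean $m_j(\theta)$ and picks a single good radius $\rho_j\in(R_0,R_1)$, whereas you truncate, work ray by ray, and then average over the annulus $\{r_0<\theta<2r_0\}$ via Jensen.
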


\begin{proof}
We use geodesic polar coordinates \((\theta,\varphi)\) around \(p_j\). Thus
\[
        d\sigma=\sin\theta\,d\theta\,d\varphi,
        \qquad
        0<\theta<\pi,\quad 0<\varphi<2\pi,
\]
and the disk \(D_{a_j}(p_j)\) is described by \(0<\theta<a_j\).

Since \(v_j\ge h_j\) for \(d\sigma\)-a.e.\ \(\omega\in D_{a_j}(p_j)\), and since
\(\sin\theta>0\) for \(0<\theta<a_j\), Fubini's theorem implies that, for a.e.\ \(\theta\in(0,a_j)\),
\[
        v_j(\theta,\varphi)\ge h_j
        \qquad\text{for a.e.\ }\varphi\in(0,2\pi).
\]
In particular, we can choose
$
        b_j\in(a_j/2,a_j)
$
such that
\[
        v_j(b_j,\varphi)\ge h_j
        \qquad\text{for a.e.\ }\varphi\in(0,2\pi).
\]
We define the circular mean
\[
        m_j(\theta)
        :=
        \frac1{2\pi}\int_0^{2\pi} v_j(\theta,\varphi)\,d\varphi.
\]
Then
$
        m_j(b_j)\ge h_j.
$
We fix \(0<R_0<R_1<\pi/4\). Since \(a_j\to 0\), for \(j\) large, we have
\(a_j<R_0\). By Jensen's inequality,
\[|m_j(\theta)|^2
\le
\frac1{2\pi}\int_0^{2\pi}|v_j(\theta,\varphi)|^2\,d\varphi\]
and so
\[
\begin{aligned}
        \int_{R_0}^{R_1} |m_j(\theta)|^2\sin\theta\,d\theta
        &\le
        \frac1{2\pi}
        \int_{R_0}^{R_1}\int_0^{2\pi}
        |v_j(\theta,\varphi)|^2
        \sin\theta\,d\varphi\,d\theta       \\
        &\le
        C\|v_j\|_{L^2(\mathbb S^2)}^2
        =
        o(h_j^2).
\end{aligned}
\]
Therefore there exists \(\rho_j\in(R_0,R_1)\) such that
$
        |m_j(\rho_j)|=o(h_j).
$
Since \(v_j\in H^1(\mathbb S^2)\), the circular mean \(m_j\) belongs to
\(H^1_{\mathrm{loc}}((0,\pi))\), and for a.e.\ \(\theta\)
\[
        m_j'(\theta)
        =
        \frac1{2\pi}
        \int_0^{2\pi}\partial_\theta v_j(\theta,\varphi)\,d\varphi.
\]
By Jensen's inequality again,
\[
        |m_j'(\theta)|^2
        \le
        \frac1{2\pi}
        \int_0^{2\pi}
        |\partial_\theta v_j(\theta,\varphi)|^2\,d\varphi,
\]
for a.e.\ \(\theta\in(0,\pi)\). Hence
\[
\begin{aligned}
        \int_{D_{R_1}(p_j)\setminus D_{b_j}(p_j)}
        |\nabla_{\mathbb S^2} v_j|^2\,d\sigma
        &\ge
        \int_{b_j}^{R_1}\int_0^{2\pi}
        |\partial_\theta v_j(\theta,\varphi)|^2
        \sin\theta\,d\varphi\,d\theta                                      \\
        &\ge
        2\pi
        \int_{b_j}^{R_1}
        |m_j'(\theta)|^2\sin\theta\,d\theta                                  \\
        &\ge
        2\pi
        \int_{b_j}^{\rho_j}
        |m_j'(\theta)|^2\sin\theta\,d\theta.
\end{aligned}
\]
Writing
\[
        m_j(b_j)-m_j(\rho_j)
        =
        -\int_{b_j}^{\rho_j} m_j'(\theta)\,d\theta ,
\]
and applying the Cauchy--Schwarz inequality,
\[
\begin{aligned}
        \left(m_j(b_j)-m_j(\rho_j)\right)^2
        &=
        \left(
        \int_{b_j}^{\rho_j}
        -m_j'(\theta)\,\sqrt{\sin\theta}\,
        \frac{d\theta}{\sqrt{\sin\theta}}
        \right)^2  \\
        &\le
        \left(
        \int_{b_j}^{\rho_j}
        |m_j'(\theta)|^2\sin\theta\,d\theta
        \right)
        \left(
        \int_{b_j}^{\rho_j}
        \frac{d\theta}{\sin\theta}
        \right).
\end{aligned}
\]
Therefore
\[
        \int_{b_j}^{\rho_j}
        |m_j'(\theta)|^2\sin\theta\,d\theta
        \ge
        \frac{
        \left(m_j(b_j)-m_j(\rho_j)\right)^2
        }{
        \displaystyle\int_{b_j}^{\rho_j}\frac{d\theta}{\sin\theta}
        }.
\]
Since \(m_j(b_j)\ge h_j\) and \(|m_j(\rho_j)|=o(h_j)\), we have
\[
        m_j(b_j)-m_j(\rho_j)
        \ge
        h_j-|m_j(\rho_j)|
        =
        h_j(1-o(1)).
\]
Hence
$        \left(m_j(b_j)-m_j(\rho_j)\right)^2
        \ge
        h_j^2(1-o(1)).
$
Moreover, since \(b_j\in(a_j/2,a_j)\) and \(\rho_j\in(R_0,R_1)\),
\[
        \int_{b_j}^{\rho_j}\frac{d\theta}{\sin\theta}
        =
        |\log a_j|+O(1).
\]
Therefore
\[
\begin{aligned}
        \int_{\mathbb S^2}|\nabla_{\mathbb S^2}v_j|^2\,d\sigma
        &\ge
        \int_{D_{R_1}(p_j)\setminus D_{b_j}(p_j)}
        |\nabla_{\mathbb S^2}v_j|^2\,d\sigma                                \\
                &\ge
        2\pi
        \frac{h_j^2(1-o(1))}{|\log a_j|+O(1)}                                \\
        &=
        \frac{2\pi h_j^2}{|\log a_j|}(1+o(1)).
\end{aligned}
\]
This proves the first assertion.

If now
\[
        h_j=\alpha\lambda_j(1+o(1)),
        \qquad
        a_j=c\lambda_j(1+o(1)),
\]
with \(\alpha,c>0\), then
$
        h_j^2=\alpha^2\lambda_j^2(1+o(1)),
        |\log a_j|=|\log\lambda_j|+O(1).
$
Replacing into the previous estimate yields
\[
        \int_{\mathbb S^2} |\nabla_{\mathbb S^2} v_j|^2\,d\sigma
        \ge
        2\pi\alpha^2
        \frac{\lambda_j^2}{|\log\lambda_j|}
        (1+o(1)).
\]
\end{proof}

\begin{remark}
In the application below the hypothesis of
Lemma~\ref{lem:one-disk-logarithmic-estimate} is automatically satisfied.
Indeed, by Corollary~\ref{cor:one-propagated-disk}, the lower bound on
\(v_j=\sigma_j u_j\) holds pointwise on the disk
\(D_{c_\alpha\lambda_j}(p_j)\). In particular, it holds \(d\sigma\)-a.e.\ on
that disk.
\end{remark}

The next lemma uses the volume constraint together with the exact
elimination of the first spherical harmonics provided by the recentering
condition \eqref{eq:first-moment-zero}.

\begin{lemma}
\label{lem:spectral}
Let \(u_j\to0\) in \(W^{1,2}(S^2)\cap L^\infty(S^2)\). Assume that
\begin{equation}\label{eq:volume-constraint-uj}
        \frac13\int_{S^2}(1+u_j)^3\,d\sigma=\frac{4\pi}{3}
\end{equation}
and
\[
        \int_{S^2}u_j(\omega)\,\omega\,d\sigma(\omega)=0.
\]
Then
\[
        \int_{S^2}
        \bigl(|\nabla_{S^2}u_j|^2-2u_j^2\bigr)\,d\sigma
        \ge
        4\int_{S^2}u_j^2\,d\sigma
        +o(\|u_j\|_{L^2(S^2)}^2).
\]
\end{lemma}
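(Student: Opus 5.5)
Expand $u_j$ in spherical harmonics and use that $-\Delta_{S^2}$ has eigenvalue $k(k+1)$ on degree-$k$ harmonics. Write
\[
 u_j=\sum_{k\ge0}Y_k^{(j)},\qquad \|u_j\|_{L^2}^2=\sum_k\|Y_k^{(j)}\|_{L^2}^2,\qquad \int_{S^2}|\nabla_{S^2}u_j|^2\,d\sigma=\sum_k k(k+1)\|Y_k^{(j)}\|_{L^2}^2 .
\]
The first-moment condition says exactly that the degree-one component vanishes, $Y_1^{(j)}=0$. For $k\ge2$ we have $k(k+1)-2\ge 4=6\cdot\frac{4}{6}$; more precisely $k(k+1)-2\ge 4$ with equality at $k=2$, and the ratio $(k(k+1)-2)/1\ge4$ holds for every $k\ge2$. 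Hence
\[
 \int_{S^2}\bigl(|\nabla_{S^2}u_j|^2-2u_j^2\bigr)\,d\sigma
 =-2\|Y_0^{(j)}\|_{L^2}^2+\sum_{k\ge2}\bigl(k(k+1)-2\bigr)\|Y_k^{(j)}\|_{L^2}^2
 \ge -2\|Y_0^{(j)}\|^2+4\sum_{k\ge2}\|Y_k^{(j)}\|^2 .
\]
Since $\sum_{k\ge2}\|Y_k^{(j)}\|^2=\|u_j\|^2-\|Y_0^{(j)}\|^2$, it suffices to show that the constant mode is negligible, namely $\|Y_0^{(j)}\|_{L^2}^2=o(\|u_j\|_{L^2}^2)$.

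For this I would use the volume constraint. Expanding $(1+u_j)^3$ gives
\[
 \int_{S^2}\bigl(3u_j+3u_j^2+u_j^3\bigr)\,d\sigma=0,
 \qquad\text{so}\qquad
 \Bigl|\int_{S^2}u_j\,d\sigma\Bigr|\le \bigl(1+\tfrac13\|u_j\|_{L^\infty}\bigr)\|u_j\|_{L^2}^2 .
\]
Since $\|Y_0^{(j)}\|_{L^2}^2=\frac{1}{4\pi}\bigl(\int_{S^2} u_j\,d\sigma\bigr)^2$, this yields $\|Y_0^{(j)}\|_{L^2}^2\le C\|u_j\|_{L^2}^4=o(\|u_j\|_{L^2}^2)$, because $u_j\to0$ in $L^2$. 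The $L^\infty$ convergence is what controls the cubic term uniformly. Substituting back, we get
\[
 \text{LHS}\ge 4\|u_j\|^2-6\|Y_0^{(j)}\|^2=4\|u_j\|^2+o(\|u_j\|^2).
\]

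The only delicate point is checking the spectral bound at the borderline $k=2$, where $6-2=4$ holds exactly, and making sure the degree-one modes are removed exactly rather than approximately. This is guaranteed by the exact zero first moment from Lemma~\ref{lem:first-moment-recentering}. Everything else is routine. The $W^{1,2}$ hypothesis is needed only to make sense of the Dirichlet energy and the Parseval identity for the gradient.
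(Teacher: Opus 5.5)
Your proof is correct and follows essentially the same route as the paper: the volume constraint shows that the mean of $u_j$ is $O(\|u_j\|_{L^2}^2)$, and the exact vanishing of the degree-one modes yields the degree-two gap $6-2=4$. The paper packages this by subtracting the mean, $w_j:=u_j-\bar u_j$, and applying $\int_{S^2}|\nabla_{S^2}w_j|^2\,d\sigma\ge 6\int_{S^2}w_j^2\,d\sigma$, which is the same computation as your mode-by-mode bound $4\|u_j\|_{L^2}^2-6\|Y_0^{(j)}\|_{L^2}^2$.
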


\begin{proof}
Set
\[
        \bar u_j:=\frac1{4\pi}\int_{S^2} u_j\,d\sigma,
        \qquad
        w_j:=u_j-\bar u_j.
\]
Expanding the volume constraint by \eqref{eq:volume-constraint-uj} gives
\[
        \int_{S^2}u_j\,d\sigma
        =
        -\int_{S^2}u_j^2\,d\sigma
        -
        \frac13\int_{S^2}u_j^3\,d\sigma.
\]
Since \(\|u_j\|_{L^\infty(S^2)}\to0\), we have
\[
        \left|\int_{S^2}u_j^3\,d\sigma\right|
        \le
        \|u_j\|_{L^\infty(S^2)}
        \int_{S^2}u_j^2\,d\sigma
        =
        o(\|u_j\|_{L^2(S^2)}^2).
\]
Therefore
\[
        \int_{S^2}u_j\,d\sigma
        =
        -\int_{S^2}u_j^2\,d\sigma
        +
        o(\|u_j\|_{L^2(S^2)}^2),
\]
and hence
$
        |\bar u_j|
        =
        O(\|u_j\|_{L^2(S^2)}^2).
$
Consequently,
\[
        \int_{S^2}w_j^2\,d\sigma
        =
        \int_{S^2}u_j^2\,d\sigma
        -
        4\pi\bar u_j^2
        =
        \int_{S^2}u_j^2\,d\sigma
        +
        o(\|u_j\|_{L^2(S^2)}^2),
\]
while
\[
        \int_{S^2}|\nabla_{S^2}w_j|^2\,d\sigma
        =
        \int_{S^2}|\nabla_{S^2}u_j|^2\,d\sigma.
\]
Moreover \(w_j\) has zero average and
\[
        \int_{S^2}w_j(\omega)\,\omega\,d\sigma(\omega)
        =
        \int_{S^2}u_j(\omega)\,\omega\,d\sigma(\omega)
        -
        \bar u_j\int_{S^2}\omega\,d\sigma
        =
        0.
\]
Thus the spherical-harmonic expansion of \(w_j\) has no modes of degree
\(0\) and no modes of degree \(1\). Indeed, the spectrum of
\(-\Delta_{S^2}\) is \(\ell(\ell+1)\), \(\ell=0,1,2,\dots\), and the
eigenspace corresponding to the eigenvalue \(2\) is spanned by the coordinate
functions \(\omega_1,\omega_2,\omega_3\); see, for instance,
Chavel~\cite[Ch.~I]{Chavel1984}. After excluding the modes of degrees~\(0\) and~\(1\), the first non-zero eigenvalue of
\(-\Delta_{S^2}\) is the degree-two eigenvalue \(6\), and
\[
        \int_{S^2}|\nabla_{S^2}w_j|^2\,d\sigma
        \ge
        6\int_{S^2}w_j^2\,d\sigma.
\]
Consequently,
\[
\begin{aligned}
        \int_{S^2}\bigl(|\nabla_{S^2}u_j|^2-2u_j^2\bigr)\,d\sigma
        &=
        \int_{S^2}|\nabla_{S^2}w_j|^2\,d\sigma
        -
        2\int_{S^2}u_j^2\,d\sigma                                      \\
        &\ge
        6\int_{S^2}w_j^2\,d\sigma
        -
        2\int_{S^2}u_j^2\,d\sigma                                      \\
        &=
        4\int_{S^2}u_j^2\,d\sigma
        +
        o(\|u_j\|_{L^2(S^2)}^2).
\end{aligned}
\]
\end{proof}

\begin{remark}
Without the first-moment condition, the degree-one spherical harmonics are
present. They correspond infinitesimally to translations of the ball. After
removing only the mean, the Poincaré inequality starts at the eigenvalue
\(2\), and the quadratic form
\[
 \int_{S^2}\bigl(|\nabla u|^2-2u^2\bigr)\,d\sigma
\]
would no longer control \(\|u\|_2^2\). The recentering condition removes
these translation modes and restores the degree-two spectral gap.
\end{remark}

We record here the precise form of the quasi-spherical perimeter
expansion needed below. This is the elementary second-order expansion
underlying Fuglede's stability theorem for convex or nearly spherical
domains (see~\cite{Fuglede1989}). If \(u\in W^{1,\infty}(S^2)\),
\(1+u>0\), and
\[
 \partial F=\{(1+u(\omega))\omega:\omega\in S^2\},
\]
then the area formula for Lipschitz parametrizations
(see, e.g., Maggi~\cite[Thm.~13.1]{Maggi2012}) yields
\[
 P(F)
 =
 \int_{S^2}
 (1+u)^2
 \sqrt{1+\frac{|\nabla_{S^2}u|^2}{(1+u)^2}}
 \,d\sigma.
\]
A Taylor expansion of the integrand yields the desired second-order
formula when both \(u\) and \(\nabla_{S^2}u\) are uniformly small.

\begin{lemma}
\label{lem:fuglede-small-slope}
Let \(F_j\subset\mathbb R^3\) be a sequence of sets of volume
\(4\pi/3\), written as radial Lipschitz graphs
\[
 \partial F_j=\{(1+u_j(\omega))\omega:\omega\in S^2\},
 \qquad u_j\in W^{1,\infty}(S^2),
\]
with \(1+u_j>0\). Assume that
\[
 \eta_j:=\|u_j\|_{L^\infty(S^2)}+
        \|\nabla_{S^2}u_j\|_{L^\infty(S^2)}\to0.
\]
Then
\[
 P(F_j)-4\pi
 =
 \frac12\int_{S^2}\bigl(|\nabla_{S^2}u_j|^2-2u_j^2\bigr)\,d\sigma
 +R_j,
\]
where
\[
 |R_j|\le C\eta_j\int_{S^2}\bigl(u_j^2+|\nabla_{S^2}u_j|^2\bigr)\,d\sigma.
\]
\end{lemma}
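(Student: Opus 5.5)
The plan is to Taylor expand the integrand of the area formula pointwise and integrate the remainder. Write
\[
f(s,p):=(1+s)^2\sqrt{1+\frac{p^2}{(1+s)^2}}=(1+s)\sqrt{(1+s)^2+p^2},
\]
evaluated at \(s=u_j(\omega)\) and \(p=|\nabla_{S^2}u_j(\omega)|\). By the area formula recalled above,
\[
P(F_j)=\int_{S^2}f\bigl(u_j,|\nabla_{S^2}u_j|\bigr)\,d\sigma .
\]
The function \(g(s,q):=(1+s)\sqrt{(1+s)^2+q}\) is smooth (real analytic) in the region \(|s|\le1/2\), \(|q|\le1/4\). Its expansion to second order in \(s\) and first order in \(q\) is
\[
g(s,q)=1+2s+s^2+\tfrac12 q+E(s,q).
\]
Here \(1+2s+s^2=(1+s)^2\) comes from \(q=0\), and \(\partial_q g(0,0)=\tfrac12\). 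The remainder satisfies \(|E(s,q)|\le C(|s|^3+|s||q|+q^2)\). With \(q=p^2\) and \(|s|,|p|\le\eta_j\), this gives
\[
|E|\le C\eta_j\,(s^2+p^2)
\]
pointwise, once \(j\) is large enough that \(\eta_j\le1/2\). For the finitely many remaining indices the estimate can be absorbed by enlarging \(C\): there both sides are controlled because \(\eta_j\) is bounded below, provided \(u_j\not\equiv0\) (the case \(u_j\equiv0\) is trivial).

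Integrating over \(S^2\) gives
\[
P(F_j)=4\pi+2\int_{S^2} u_j\,d\sigma+\int_{S^2} u_j^2\,d\sigma+\frac12\int_{S^2}|\nabla_{S^2} u_j|^2\,d\sigma+\int_{S^2}E\,d\sigma .
\]
The linear term is removed with the volume constraint, exactly as in the proof of Lemma~\ref{lem:spectral}:
\[
\frac13\int_{S^2}(1+u_j)^3\,d\sigma=\frac{4\pi}{3}
\quad\Longrightarrow\quad
\int_{S^2} u_j\,d\sigma=-\int_{S^2} u_j^2\,d\sigma-\frac13\int_{S^2} u_j^3\,d\sigma .
\]
Therefore
\[
2\int_{S^2} u_j+\int_{S^2} u_j^2=-\int_{S^2} u_j^2-\frac23\int_{S^2} u_j^3 .
\]
The zeroth- through second-order terms then assemble into
\[
\frac12\int_{S^2}\bigl(|\nabla_{S^2} u_j|^2-2u_j^2\bigr)\,d\sigma .
\]
What is left is
\[
R_j=-\frac23\int_{S^2} u_j^3\,d\sigma+\int_{S^2}E\,d\sigma,
\qquad
\Bigl|\int_{S^2} u_j^3\,d\sigma\Bigr|\le\eta_j\int_{S^2} u_j^2\,d\sigma .
\]
Together with the pointwise bound on \(E\), this yields \(|R_j|\le C\eta_j\int_{S^2}(u_j^2+|\nabla_{S^2}u_j|^2)\,d\sigma\).

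The only step that needs care is making the Taylor remainder bound uniform. I would do this by writing \(E\) through the integral form of Taylor's theorem on the compact region above, where all second and third derivatives of \(g\) are bounded. The key point is that \(E\) has no pure \(q\)-linear or \(s^2\) part beyond what has been extracted, so every term in \(E\) carries an extra factor of \(|s|\), \(|q|\), or \(|p|\), each at most \(\eta_j\). The terms \(q^2=p^4\) and \(s q=s p^2\) are then bounded by \(\eta_j^2p^2\) and \(\eta_j p^2\), and \(s^3\) by \(\eta_j s^2\). No other obstacle is expected: measurability and integrability are immediate since \(u_j\in W^{1,\infty}\).
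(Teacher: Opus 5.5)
Your proposal is correct and follows the paper's proof. Both arguments Taylor expand the area-formula integrand to second order, bound the remainder pointwise by $C\eta_j(u_j^2+|\nabla_{S^2}u_j|^2)$, and remove the linear term with the volume constraint, which leaves the same $-\frac23\int_{S^2}u_j^3\,d\sigma$ term. Expanding in $q=|\nabla_{S^2}u_j|^2$ rather than in $\xi=\nabla_{S^2}u_j$ is only cosmetic; in fact $\partial_q g(s,0)=\tfrac12$ for every $s$, so $|E|\le Cq^2$. One small slip: $g$ is not smooth on the whole region $|s|\le 1/2$, $|q|\le 1/4$, since the radicand vanishes at $(-1/2,-1/4)$. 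Because $q=p^2\ge 0$, restricting to $0\le q\le 1/4$ fixes this.
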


\begin{proof}
For \(|u|+|\xi|\le\eta_j\) and \(j\) large, define
\[
 f(u,\xi):=(1+u)^2\sqrt{1+\frac{|\xi|^2}{(1+u)^2}}.
\]
Since \(f\) is smooth in a fixed neighborhood of \((0,0)\) and
\(f(0,0)=1\), \(\partial_u f(0,0)=2\), \(\partial_\xi f(0,0)=0\),
\(\partial^2_{uu}f(0,0)=2\), \(\partial^2_{\xi\xi}f(0,0)=I\),
Taylor's formula with integral remainder gives
\[
 f(u,\xi)=1+2u+u^2+\frac12|\xi|^2+r(u,\xi),
\]
where
\[
 |r(u,\xi)|\le C(|u|+|\xi|)\bigl(u^2+|\xi|^2\bigr)
 \le C\eta_j\bigl(u^2+|\xi|^2\bigr)
\]
uniformly for \(|u|+|\xi|\le\eta_j\).
Applying the area formula and integrating term by term, one gets
\begin{equation}\label{eq:perimeter-expansion-with-R1}
\begin{aligned}
 P(F_j)
 &=
 \int_{S^2}f(u_j,\nabla_{S^2}u_j)\,d\sigma  \\
 &=
 4\pi+2\int_{S^2}u_j\,d\sigma
 +\int_{S^2}u_j^2\,d\sigma
 +\frac12\int_{S^2}|\nabla_{S^2}u_j|^2\,d\sigma
 +R_j^{(1)},
\end{aligned}
\end{equation}
where
\begin{equation}\label{eq:R1-bound}
 R_j^{(1)}:=\int_{S^2}r(u_j,\nabla_{S^2}u_j)\,d\sigma,
 \qquad
 |R_j^{(1)}|
 \le
 C\eta_j\int_{S^2}\bigl(u_j^2+|\nabla_{S^2}u_j|^2\bigr)\,d\sigma.
\end{equation}
Using the volume constraint \eqref{eq:volume-constraint-uj}, we get
\[
        3\int_{S^2}u_j\,d\sigma
        +3\int_{S^2}u_j^2\,d\sigma
        +\int_{S^2}u_j^3\,d\sigma
        =0,
\]
and therefore
\begin{equation}\label{eq:volume-linear-term}
 \int_{S^2}u_j\,d\sigma
 =
 -\int_{S^2}u_j^2\,d\sigma
 -\frac13\int_{S^2}u_j^3\,d\sigma.
\end{equation}
It follows from \eqref{eq:volume-linear-term} that
\begin{equation}\label{eq:linear-terms-volume-reduction}
 2\int_{S^2}u_j\,d\sigma
 +\int_{S^2}u_j^2\,d\sigma
 =
 -\int_{S^2}u_j^2\,d\sigma
 +R_j^{(2)},
\end{equation}
where
\begin{equation}\label{eq:R2-definition-bound}
 R_j^{(2)}
 :=
 -\frac23\int_{S^2}u_j^3\,d\sigma,
 \qquad
 |R_j^{(2)}|
 \le
 \frac23\eta_j\int_{S^2}u_j^2\,d\sigma.
\end{equation}
Combining \eqref{eq:perimeter-expansion-with-R1} with
\eqref{eq:linear-terms-volume-reduction}, we obtain
\[
 P(F_j)-4\pi
 =
 \frac12\int_{S^2}
 \bigl(|\nabla_{S^2}u_j|^2-2u_j^2\bigr)\,d\sigma
 +R_j,
\]
where $R_j:=R_j^{(1)}+R_j^{(2)}$
satisfies
$\displaystyle
 |R_j|
 \le
 C\eta_j\int_{S^2}
 \bigl(u_j^2+|\nabla_{S^2}u_j|^2\bigr)\,d\sigma.
$
\end{proof}

\begin{remark}
The estimate on \(R_j\) is a relative second-order estimate in the
small-slope regime. In the convex class, after the volume constraint has controlled the average
of \(u\) and the chosen normalization has removed the first spherical
harmonics, the quadratic form
\[
 u\mapsto \int_{S^2}(|\nabla_{S^2}u|^2-2u^2)\,d\sigma
\]
is coercive up to an error of order \(o(\|u\|_{L^2}^2)\), by
Lemma~\ref{lem:spectral}. This will be used in the proof of Theorem~\ref{thm:rough-local-gap}.
\end{remark}

We are going to show that in the convex class, the
small-slope condition required by Lemma~\ref{lem:fuglede-small-slope}
is automatic for sequences converging to the unit ball in Hausdorff
distance. The key estimate is essentially
contained in Fuglede~\cite[Lemma~2.2]{Fuglede1989}; we only translate
his notation to the present Hausdorff normalization.

\begin{lemma}
\label{lem:automatic-slope}
Let $K\subset\R^3$ be a convex body with $0\in\mathrm{int}(K)$ and
\(d_H(K,B)=\lambda\in(0,1/2)\). Let \(\rho_K\) be the radial function
of \(K\), and write
\[
 \partial K=\{\rho_K(\omega)\omega:\omega\in S^2\},
 \qquad u:=\rho_K-1.
\]
Then \(u\in W^{1,\infty}(S^2)\),
\[
        \|u\|_{L^\infty(S^2)}=\lambda,
\]
and
\[
 \|\nabla_{S^2}u\|_{L^\infty(S^2)}
 \;\le\;
 \frac{2\sqrt{\lambda}\,(1+\lambda)}{1-\lambda}
 \;=\;
 2\sqrt{\lambda}\,\bigl(1+O(\lambda)\bigr).
\]
In particular,
\[
 \|u\|_{L^\infty(S^2)}+\|\nabla_{S^2}u\|_{L^\infty(S^2)}
 \;=\;
 O(\sqrt{\lambda}).
\]
\end{lemma}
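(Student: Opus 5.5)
The first two assertions come directly from Lemma~\ref{lem:hausdorff-support-radial}: since \(d_H(K,B)=\lambda<1\), we have \(B_{1-\lambda}\subset K\subset B_{1+\lambda}\) and \(\|u\|_{L^\infty(S^2)}=\lambda\). That \(u\in W^{1,\infty}(S^2)\) follows from the polar-body argument in the proof of Lemma~\ref{lem:uniform-radial-stability}. Indeed, \(\rho_K=1/h_{K^\circ}\) with \(B_{1/(1+\lambda)}\subset K^\circ\subset B_{1/(1-\lambda)}\), so \(\rho_K\) is Lipschitz on \(S^2\), and by Rademacher's theorem \(\nabla_{S^2}u\) exists \(\sigma\)-a.e. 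The whole task is therefore the quantitative slope bound, and it suffices to prove it at every point \(\omega\in S^2\) where \(\rho_K\) is differentiable.

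\emph{Reduction to the angle between position and normal.} Fix such an \(\omega\), set \(x=\rho_K(\omega)\omega\in\partial K\), and let \(\nu\) be an outer unit normal of a supporting plane at \(x\). Because \(\rho_K\) is differentiable at \(\omega\), the supporting plane there is essentially unique: it is the tangent plane of the radial graph. For a radial graph \(r=\rho(\omega)\), the standard relation between the normal and the gradient is
\[
 \nu=\frac{\rho\,\omega-\nabla_{S^2}\rho}{\sqrt{\rho^2+|\nabla_{S^2}\rho|^2}},
\]
which gives \(|\nabla_{S^2}u|/\rho_K=\tan\gamma\), where \(\gamma\) is the angle between \(\omega\) and \(\nu\). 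The bound therefore reduces to
\[
 |\nabla_{S^2}u(\omega)|=\rho_K(\omega)\tan\gamma\le(1+\lambda)\tan\gamma .
\]

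\emph{Bounding \(\gamma\) by convexity.} The point \((1-\lambda)\nu\) lies in \(B_{1-\lambda}\subset K\), and \(K\) lies in the half-space \(\{y:y\cdot\nu\le x\cdot\nu\}\). This is exactly the argument of Lemma~\ref{lem:linear-inward}, and it gives
\[
 1-\lambda\le \rho_K(\omega)\cos\gamma\le(1+\lambda)\cos\gamma,
 \qquad\text{so}\qquad
 \cos\gamma\ge\frac{1-\lambda}{1+\lambda}.
\]
Consequently
\[
 \tan\gamma=\frac{\sqrt{1-\cos^2\gamma}}{\cos\gamma}
 \le\frac{1+\lambda}{1-\lambda}\cdot\frac{\sqrt{4\lambda}}{1+\lambda}
 =\frac{2\sqrt\lambda}{1-\lambda},
\]
using \((1+\lambda)^2-(1-\lambda)^2=4\lambda\). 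Multiplying by \(\rho_K\le1+\lambda\) gives
\[
 |\nabla_{S^2}u(\omega)|\le\frac{2\sqrt\lambda\,(1+\lambda)}{1-\lambda}.
\]
The expansion \(2\sqrt\lambda\,(1+O(\lambda))\) is immediate because \(\lambda<1/2\). The final assertion \(\|u\|_\infty+\|\nabla u\|_\infty=O(\sqrt\lambda)\) then follows, since \(\lambda\le\sqrt\lambda\).

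\emph{Main obstacle.} The step needing care is the identity \(|\nabla_{S^2}\rho|=\rho\tan\gamma\) at points of differentiability; there are no curvature assumptions. I would verify it directly. Differentiate along a curve on \(S^2\) through \(\omega\) with tangent \(\tau\perp\omega\): the vector \(\partial_\tau(\rho\omega)=(\nabla\rho\cdot\tau)\,\omega+\rho\,\tau\) is tangent to \(\partial K\), hence orthogonal to \(\nu\), because the supporting plane at a differentiability point of the Lipschitz graph must contain all these tangent directions. Solving these orthogonality conditions for \(\nu\) gives the normal formula displayed above. The differentiability points have full measure, and this is enough for the \(L^\infty\) bound.
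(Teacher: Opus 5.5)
Your argument is correct. It differs from the paper's mainly in that it proves the slope bound rather than importing it.

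The paper obtains \(\|u\|_{L^\infty(S^2)}=\lambda\) and \(B_{1-\lambda}\subset K\subset B_{1+\lambda}\) from Lemma~\ref{lem:hausdorff-support-radial}, exactly as you do. For the gradient it simply quotes Fuglede's Lemma~2.2, asserting that its proof uses only convexity and these two inclusions, and then reads off \(u\in W^{1,\infty}(S^2)\) from the resulting gradient bound.

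You instead give the elementary argument in three steps:
\begin{enumerate}
\item Lipschitz regularity first, via \(\rho_K=1/h_{K^\circ}\), so that \(\nabla_{S^2}u\) exists a.e.
\item At a differentiability point, \(\omega'\mapsto\rho_K(\omega')\,\omega'\cdot\nu\) attains its maximum at \(\omega\). This forces \(\nu\propto\rho_K\omega-\nabla_{S^2}\rho_K\), i.e.\ \(|\nabla_{S^2}u|=\rho_K\tan\gamma\).
\item The supporting half-space tested at \((1-\lambda)\nu\), the same device as in Lemma~\ref{lem:linear-inward}, gives \(\rho_K\cos\gamma\ge 1-\lambda\). Together with \(\rho_K\le 1+\lambda\), this yields exactly the constant \(2\sqrt{\lambda}\,(1+\lambda)/(1-\lambda)\).
\end{enumerate}

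The paper's route is shorter, but it leaves the reader to check that Fuglede's normalized setting really transfers. Yours is self-contained and confirms that transfer explicitly. It also puts the regularity in the right logical order: an a.e.\ bound on \(\nabla_{S^2}u\) presupposes that the gradient exists, so \(W^{1,\infty}\) should come first rather than ``in particular'' afterwards.

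One cosmetic point: with the paper's convention that \(B_\rho\) is open, \((1-\lambda)\nu\) lies on \(\partial B_{1-\lambda}\), not in \(B_{1-\lambda}\). You should say \(\overline{B_{1-\lambda}}\subset K\) because \(K\) is closed. The paper has the same slip in Lemma~\ref{lem:linear-inward}.
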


\begin{proof}
By Lemma~\ref{lem:hausdorff-support-radial},
\[
        B_{1-\lambda}\subset K\subset B_{1+\lambda},
        \qquad
        \|u\|_{L^\infty(S^2)}=\lambda.
\]
In the notation of Fuglede~\cite[Definition~2.1]{Fuglede1989}, these
inclusions mean that the spherical deviation of \(K\), with respect to the
present center, is \(d=\lambda\). Fuglede's Lemma~2.2 is stated for normalized
convex bodies, but its proof uses only the convexity of the body and the
two inclusions above. Therefore the same estimate applies here to the radial
parametrization
\[
        \partial K=\{(1+u(\omega))\omega:\omega\in S^2\}.
\]
It gives
\[
        \|\nabla_{S^2}u\|_{L^\infty(S^2)}
        \le
        2\sqrt{\lambda}\,\frac{1+\lambda}{1-\lambda}.
\]
In particular \(u\in W^{1,\infty}(S^2)\). Since \(\lambda<1/2\),
\[
        \frac{1+\lambda}{1-\lambda}=1+O(\lambda),
\]
and hence
\[
        \|\nabla_{S^2}u\|_{L^\infty(S^2)}
        \le
        2\sqrt{\lambda}(1+O(\lambda)).
\]
Together with \(\|u\|_{L^\infty(S^2)}=\lambda\), this yields
\[
        \|u\|_{L^\infty(S^2)}
        +
        \|\nabla_{S^2}u\|_{L^\infty(S^2)}
        =
        O(\sqrt{\lambda}).
\]
\end{proof}

\begin{remark}
\label{rem:cone-sharpness}
The exponent $1/2$ in Lemma~\ref{lem:automatic-slope} is sharp.
Consider the convex prolate cone
\(K_\lambda:=\mathrm{conv}(B\cup\{\pm(1+\lambda)e_3\})\).
A direct computation in geodesic polar coordinates around $e_3$ gives,
for $\theta\in[0,\theta_0]$ with $\cos\theta_0=1/(1+\lambda)$,
\[
 1+u(\theta)
 =
 \frac{1+\lambda}{\cos\theta+\sin\theta\,\sqrt{2\lambda+\lambda^2}},
\]
and $u(\theta)=0$ for $\theta\in[\theta_0,\pi-\theta_0]$.
Differentiating at $\theta=0^+$, we get
\[
 \|\nabla_{S^2}u\|_{L^\infty(S^2)}
 =
 \sqrt{2\lambda}\,\bigl(1+O(\lambda)\bigr),
\]
which matches the upper bound of Lemma~\ref{lem:automatic-slope} up to
the constant $\sqrt 2$.
\end{remark}

We recall the admissible class used throughout the paper. We set
\[
        \cA
        :=
        \left\{
        K\subset\R^3:\ K \text{ is a compact convex body and }
        |K|=\frac{4\pi}{3}
        \right\}.
\]
In particular, when we write \(F_j\to B\) in the
Hausdorff distance below, we are considering representatives which converge
to the centered unit ball \(B=B_1(0)\).

\begin{corollary}
\label{cor:fuglede-automatic}
Let $(F_j)_j\subset\cA$ be a sequence with $F_j\to B$ in
Hausdorff distance, and write
\(\partial F_j=\{(1+u_j(\omega))\omega:\omega\in S^2\}\),
\(\lambda_j:=d_H(F_j,B)\). Then
\[
 \|u_j\|_{L^\infty(S^2)}+\|\nabla_{S^2}u_j\|_{L^\infty(S^2)}
 \;\le\;
 C\sqrt{\lambda_j}\to0,
\]
and
\begin{equation}\label{eq:automatic-fuglede-expansion}
\begin{aligned}
 P(F_j)-4\pi
 &=
 \frac12\int_{S^2}
 \bigl(|\nabla_{S^2}u_j|^2-2u_j^2\bigr)\,d\sigma+R_j, \\
 |R_j|
 &\le
 C\sqrt{\lambda_j}\int_{S^2}
        \bigl(u_j^2+|\nabla_{S^2}u_j|^2\bigr)\,d\sigma.
\end{aligned}
\end{equation}
\end{corollary}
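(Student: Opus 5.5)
The corollary follows by chaining Lemma~\ref{lem:automatic-slope} and Lemma~\ref{lem:fuglede-small-slope}. The only preliminary point is to check that the hypotheses of Lemma~\ref{lem:automatic-slope} hold for \(j\) large.

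Since \(F_j\to B\) in Hausdorff distance, we have \(\lambda_j=d_H(F_j,B)\to0\). Hence \(\lambda_j<1/2\) for \(j\) large, and finitely many terms do not affect the asymptotic statement. Lemma~\ref{lem:hausdorff-support-radial} gives \(B_{1-\lambda_j}\subset F_j\). In particular \(0\in\mathrm{int}(F_j)\), so the radial function is well defined and \(\partial F_j=\{(1+u_j(\omega))\omega\}\) with \(1+u_j\ge 1-\lambda_j>0\). If \(\lambda_j=0\) for some \(j\), then \(F_j=B\), \(u_j\equiv0\), and both claims are trivial. We may therefore assume \(\lambda_j\in(0,1/2)\).

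Lemma~\ref{lem:automatic-slope} then gives \(u_j\in W^{1,\infty}(S^2)\), \(\|u_j\|_\infty=\lambda_j\), and \(\|\nabla_{S^2}u_j\|_\infty\le 2\sqrt{\lambda_j}(1+\lambda_j)/(1-\lambda_j)\le 6\sqrt{\lambda_j}\). Since \(\lambda_j\le\sqrt{\lambda_j}\) for \(\lambda_j<1\), we obtain
\[
\eta_j:=\|u_j\|_\infty+\|\nabla_{S^2}u_j\|_\infty\le 7\sqrt{\lambda_j}\to0.
\]
This proves the first claim.

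For the expansion, the sets \(F_j\in\cA\) have volume \(4\pi/3\), which in radial coordinates is exactly the constraint \eqref{eq:volume-constraint-uj}. Together with \(u_j\in W^{1,\infty}\), \(1+u_j>0\) and \(\eta_j\to0\), all hypotheses of Lemma~\ref{lem:fuglede-small-slope} are met. It yields the quadratic expansion with \(|R_j|\le C\eta_j\int_{S^2}(u_j^2+|\nabla_{S^2}u_j|^2)\,d\sigma\). Inserting \(\eta_j\le 7\sqrt{\lambda_j}\) and absorbing the factor \(7\) into \(C\) gives \eqref{eq:automatic-fuglede-expansion}.

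There is no real obstacle here. The one point needing care is that the Taylor remainder bound in Lemma~\ref{lem:fuglede-small-slope} requires \(\eta_j\) to be below a fixed threshold, which holds for \(j\) large. The constant \(C\) is then uniform in \(j\) and independent of the sequence.
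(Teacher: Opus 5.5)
Your proposal is correct and follows the paper's proof essentially step for step. You restrict to $j$ large so that $\lambda_j<1/2$, use $B_{1-\lambda_j}\subset F_j$ from Lemma~\ref{lem:hausdorff-support-radial} to put the origin in the interior, then invoke Lemma~\ref{lem:automatic-slope} for the slope bound and Lemma~\ref{lem:fuglede-small-slope} with $\eta_j=O(\sqrt{\lambda_j})$ for the expansion; your extra bookkeeping (the trivial case $\lambda_j=0$, the explicit constant $7$, and checking that $|F_j|=4\pi/3$ is exactly the radial volume constraint) just refines the same argument.
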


\begin{proof}
For $j$ large, $\lambda_j<1/2$. Since $F_j\to B$ in Hausdorff
distance, Lemma~\ref{lem:hausdorff-support-radial} gives
$B_{1-\lambda_j}\subset F_j\subset B_{1+\lambda_j}$; in particular
$0\in\mathrm{int}(F_j)$. The slope bound then follows from
Lemma~\ref{lem:automatic-slope} applied to $K=F_j$, and the expansion
follows from Lemma~\ref{lem:fuglede-small-slope} with
$\eta_j=O(\sqrt{\lambda_j})$.
\end{proof}

\begin{corollary}
\label{cor:remainder-Q-bounded}
Let \((F_j)_j\subset\cA\) satisfy \(F_j\to B\) in Hausdorff
distance and \(\cQstar(F_j)\le M\) for every \(j\). Set
\[
        \lambda_j:=d_H(F_j,B),
        \qquad
        \delta_j:=\delta(F_j).
\]
Then
\[
        \delta_j\le \frac{C\lambda_j^2}{|\log\lambda_j|}
\]
for \(j\) large, and \eqref{eq:automatic-fuglede-expansion} satisfies
\[
        |R_j|\le C\lambda_j^{5/2}
        =
        o\!\left(\frac{\lambda_j^2}{|\log\lambda_j|}\right).
\]
In particular, along every \(\cQstar\)-bounded sequence in
\(\cA\) converging to \(B\), the perimeter expansion holds with
an \(o(\lambda_j^2/|\log\lambda_j|)\) remainder.
\end{corollary}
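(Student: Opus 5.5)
The plan is to first extract the bound on \(\delta_j\) from the bound \(\cQstar(F_j)\le M\), and then feed it back into the expansion \eqref{eq:automatic-fuglede-expansion} of Corollary~\ref{cor:fuglede-automatic}. This gives an \(H^1\) bound on \(u_j\) of order \(\lambda_j\), which in turn controls the remainder.

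\emph{Step 1: the bound on \(\delta_j\).} By definition of \(\lambda_*\) as an infimum over translations, \(\lambda_*(F_j)\le d_H(F_j,B)=\lambda_j\). By the isoperimetric inequality \(P(F_j)\ge 4\pi\), hence \(P(F_j)^3\ge(4\pi)^3\). Since the perimeter is continuous under Hausdorff convergence of convex bodies, \(\delta_j\to0\); in particular \(\delta_j<1/e\) for \(j\) large. For such \(j\) we have \(\log(\delta_j+\delta_j^{-1})\ge\log(1/\delta_j)\ge 1\), and the bound \(\cQstar(F_j)\le M\) gives
\[
 \delta_j\log(1/\delta_j)\le C_1\lambda_j^2,
 \qquad C_1:=M/(4\pi)^3 .
\]
We then invert this in two passes.
\begin{itemize}
\item First, \(\log(1/\delta_j)\ge1\) yields \(\delta_j\le C_1\lambda_j^2\).
\item Consequently \(\log(1/\delta_j)\ge 2|\log\lambda_j|-\log C_1\ge|\log\lambda_j|\) for \(j\) large.
\item Inserting this back gives \(\delta_j\le C_1\lambda_j^2/|\log\lambda_j|\).
\end{itemize}

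\emph{Step 2: an \(H^1\) bound on \(u_j\).} By Lemma~\ref{lem:hausdorff-support-radial}, \(\|u_j\|_{L^\infty}=\lambda_j\), so \(\int_{S^2}u_j^2\,d\sigma\le4\pi\lambda_j^2\). Solving \eqref{eq:automatic-fuglede-expansion} for the gradient term gives
\[
 \frac12\int_{S^2}|\nabla_{S^2}u_j|^2\,d\sigma
 =4\pi\delta_j+\int_{S^2}u_j^2\,d\sigma-R_j
 \le 4\pi\delta_j+4\pi\lambda_j^2+C\sqrt{\lambda_j}\int_{S^2}\bigl(u_j^2+|\nabla_{S^2}u_j|^2\bigr)\,d\sigma .
\]
For \(j\) large we have \(C\sqrt{\lambda_j}<1/4\), so the gradient term on the right can be absorbed into the left-hand side. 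Using Step 1, this yields \(\int_{S^2}|\nabla_{S^2}u_j|^2\,d\sigma\le C(\delta_j+\lambda_j^2)\le C\lambda_j^2\).

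\emph{Step 3: the remainder.} Plugging Step 2 into the remainder estimate of Corollary~\ref{cor:fuglede-automatic} gives \(|R_j|\le C\sqrt{\lambda_j}\,\lambda_j^2=C\lambda_j^{5/2}\). Finally,
\[
 \frac{\lambda_j^{5/2}}{\lambda_j^2/|\log\lambda_j|}=\sqrt{\lambda_j}\,|\log\lambda_j|\to0,
\]
which is the claimed \(o(\lambda_j^2/|\log\lambda_j|)\) bound.

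The only delicate point is the absorption in Step 2. A priori Corollary~\ref{cor:fuglede-automatic} gives only \(|\nabla_{S^2} u_j|\le C\sqrt{\lambda_j}\) pointwise, which would yield the useless bound \(\int_{S^2}|\nabla_{S^2}u_j|^2\,d\sigma=O(\lambda_j)\). The improvement to \(O(\lambda_j^2)\) comes from using the smallness of the deficit (Step 1) together with the fact that the remainder's coefficient \(C\sqrt{\lambda_j}\) tends to zero. Note that only the crude consequence \(\delta_j\le C\lambda_j^2\) of Step 1 is needed here; the logarithmic gain enters only in the first claim of the corollary.
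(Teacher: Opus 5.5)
Your proof is correct and follows the paper's argument. You extract $\delta_j\log(1/\delta_j)\le C\lambda_j^2$ from the bound on $\cQstar$ (using $\lambda_*(F_j)\le\lambda_j$) and invert it to get $\delta_j\le C\lambda_j^2/|\log\lambda_j|$; you then combine the expansion of Corollary~\ref{cor:fuglede-automatic} with $\int_{S^2}u_j^2\,d\sigma\le 4\pi\lambda_j^2$ and an absorption argument to obtain $\int_{S^2}(u_j^2+|\nabla_{S^2}u_j|^2)\,d\sigma=O(\lambda_j^2)$, hence $|R_j|\le C\lambda_j^{5/2}$.

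The differences are cosmetic. You invert the logarithmic inequality by a direct two-pass bootstrap, whereas the paper argues by contradiction via the monotonicity of $t\log(1/t)$ to show $\delta_j\le\lambda_j$. You also absorb the gradient term rather than $|R_j|$ itself.
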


\begin{proof}
Since \(F_j\to B\) in the Hausdorff metric and the sets \(F_j\) are convex,
the perimeter is continuous along this convergence. Hence
$
        P(F_j)\to P(B)=4\pi,
        \delta_j\to0.
$
From \(\cQstar(F_j)\le M\) and
\(\lambda_*(F_j)\le \lambda_j=d_H(F_j,B)\), we get
\[
        \delta_j\log(\delta_j+\delta_j^{-1})
        =
        \frac{\cQstar(F_j)\lambda_*(F_j)^2}{P(F_j)^3}
        \le
        C\lambda_j^2.
\]
Since \(\delta_j\to0\),
$
        \log(\delta_j+\delta_j^{-1})
        =
        (1+o(1))\log\frac1{\delta_j}.
$
Thus, for \(j\) large,
$        \delta_j\log\frac1{\delta_j}
        \le
        C\lambda_j^2.
$
We claim that this implies
\[
        \delta_j\le C\frac{\lambda_j^2}{|\log\lambda_j|}.
\]
Indeed, if \(\delta_j>\lambda_j\) along a subsequence, then, for \(j\)
large enough so that \(\delta_j,\lambda_j<e^{-1}\), the monotonicity of
\(t\mapsto t\log(1/t)\) on \((0,e^{-1})\) gives
$
        \delta_j\log\frac1{\delta_j}
        \ge
        \lambda_j\log\frac1{\lambda_j}.
$
This contradicts
$
        \delta_j\log\frac1{\delta_j}
        \le
        C\lambda_j^2,
$
because
$
        \lambda_j\log\frac1{\lambda_j}\gg \lambda_j^2.
$
Hence \(\delta_j\le\lambda_j\) for \(j\) large. Consequently
\[
        \log\frac1{\delta_j}\ge |\log\lambda_j|,
\]
and the desired bound follows.
Set
\[
        A_j:=\int_{S^2}|\nabla_{S^2}u_j|^2\,d\sigma,
        \qquad
        B_j:=\int_{S^2}u_j^2\,d\sigma.
\]
By Lemma~\ref{lem:hausdorff-support-radial},
$
        B_j\le 4\pi\lambda_j^2.
$
By \eqref{eq:automatic-fuglede-expansion}, we obtain
\[
        P(F_j)-4\pi
        =
        \frac12(A_j-2B_j)+R_j.
\]
Therefore
\[
        A_j
        =
        8\pi\delta_j+2B_j-2R_j
        \le
        8\pi\delta_j+8\pi\lambda_j^2+2|R_j|.
\]
Since \(\delta_j=o(\lambda_j^2)\), this gives
$
        A_j+B_j\le C\lambda_j^2+2|R_j|.
$
By Corollary~\ref{cor:fuglede-automatic},
$
        |R_j|
        \le
        C\sqrt{\lambda_j}(A_j+B_j).
$
Thus
$
        |R_j|
        \le
        C\lambda_j^{5/2}
        +
        C\sqrt{\lambda_j}|R_j|.
$
For \(j\) large, the last term is absorbed into the left-hand side, and hence
$
        |R_j|\le C\lambda_j^{5/2}.
$
Since
$
        \sqrt{\lambda_j}\,|\log\lambda_j|\to0,
$
we have
$
        \lambda_j^{5/2}
        =
        o\!\left(\frac{\lambda_j^2}{|\log\lambda_j|}\right).
$
\end{proof}

\begin{remark}
\label{rem:no-non-small-slope}
The same argument applies to any translated representative
\(K_j=F_j-z_j\to B\). Indeed, \(P\), \(\delta\), volume, and \(\cQstar\)
are translation invariant, and the only place where the distance enters the
proof is through the bound
\[
        \lambda_*(F_j)\le d_H(K_j,B).
\]
Thus the conclusion holds with \(\lambda_j=d_H(K_j,B)\) and with the
remainder associated with the radial graph of \(K_j\).
\end{remark}

We are now ready to prove Theorem~\ref{thm:rough-local-gap}.
\begin{proof}
It is enough to prove the estimate along an arbitrary subsequence realizing
the lower limit. If \(\cQstar(F_j)\to+\infty\) along that subsequence, there is
nothing to prove. Hence, after passing to a further subsequence, we may assume
that
\[
        \sup_j \cQstar(F_j)\le M<+\infty.
\]

We now fix the center used for the radial parametrization. First, we choose a
translation for which the Hausdorff distance from the unit ball is almost
minimal, that is, close to \(\lambda_*\). We then make a further controlled
translation in order to impose the first-moment normalization
\[
        \int_{S^2} u\,\omega\,d\sigma=0,
\]
which is needed in the spectral estimate.

Choose translations \(y_j\in\mathbb R^3\) such that
\[
        \varepsilon_j:=d_H(F_j-y_j,B)\le 2\lambda_*(F_j).
\]
Since \(\lambda_*(F_j)\to0\), we have \(\varepsilon_j\to0\). Applying
Lemma~\ref{lem:first-moment-recentering} to
$
        \widehat K_j:=F_j-y_j,
$
we find vectors \(a_j\in\mathbb R^3\), with
$
        |a_j|\le C\varepsilon_j,
$
such that, setting
\[
        K_j:=F_j-y_j-a_j,
\]
and writing
$
        \partial K_j=\{(1+u_j(\omega))\omega:\omega\in S^2\},
$
we have the exact first-moment cancellation
\[
        \int_{S^2}u_j(\omega)\omega\,d\sigma(\omega)=0.
\]
Now we define
$
        \lambda_j:=d_H(K_j,B).
$
By the triangle inequality and the estimate on \(a_j\),
\[
        \lambda_j
        \le
        d_H(F_j-y_j,B)+|a_j|
        \le
        C\varepsilon_j
        \le
        C\lambda_*(F_j).
\]
On the other hand, by the definition of \(\lambda_*(F_j)\), applied to the
specific center \(y_j+a_j\),
\[
        \lambda_*(F_j)
        \le
        d_H(F_j,B+y_j+a_j)
        =
        d_H(K_j,B)
        =
        \lambda_j.
\]
Consequently,
$
        \lambda_*(F_j)\le \lambda_j\le C\lambda_*(F_j).
$
In particular, \(\lambda_j\to0\). Moreover, by
Lemma~\ref{lem:hausdorff-support-radial},
$
        \|u_j\|_{L^\infty(S^2)}=\lambda_j
$.
Then \(K_j\) is parametrized radially with respect to the origin. Equivalently,
the original set \(F_j\) is parametrized radially with respect to the center
\(y_j+a_j\).
Since
$
        \lambda_*(F_j)\le \lambda_j,
$
we have
\[
        \cQstar(F_j)
        =
        \frac{
        \delta(F_j)\log(\delta(F_j)+\delta(F_j)^{-1})P(F_j)^3
        }{\lambda_*(F_j)^2}
        \ge
        \frac{
        \delta(F_j)\log(\delta(F_j)+\delta(F_j)^{-1})P(F_j)^3
        }{\lambda_j^2}.
\]
Thus it is enough to obtain a lower bound for the right-hand side.
We set
\[
        L_j:=\frac{\lambda_j^2}{|\log\lambda_j|}.
\]
We apply Corollary~\ref{cor:remainder-Q-bounded} to the translated
representatives \(K_j\). Since the perimeter, the volume, the deficit and \(\cQstar\)
are translation invariant, the estimates apply with
\(\lambda_j=d_H(K_j,B)\). Hence
\begin{equation}\label{eq:def-delta-j-bound-Lj}
        \delta_j:=\delta(K_j)=\delta(F_j)
        \le
        C L_j,
\end{equation}
\begin{equation}\label{eq:perimeter-expansion-o-Lj}
        P(K_j)-4\pi
        =
        \frac12
        \int_{S^2}
        \bigl(|\nabla_{S^2}u_j|^2-2u_j^2\bigr)\,d\sigma
        +
        o(L_j).
\end{equation}
Since \(P(K_j)=P(F_j)\), this is also the perimeter expansion for \(F_j\).
We fix \(\alpha\in(0,1)\). By
Corollary~\ref{cor:one-propagated-disk}, there exist points
\(p_j\in S^2\), signs \(\sigma_j\in\{-1,1\}\), a constant
\(c_\alpha>0\), and a sequence \(\eta_j\to 0\), such that
\[
        \sigma_j u_j(\omega)
        \ge
        \alpha\lambda_j(1-\eta_j)
        \qquad
        \text{for every }\omega\in D_{c_\alpha\lambda_j}(p_j).
\]
We set
\[
        v_j:=\sigma_j u_j,
        \qquad
        h_j:=\alpha\lambda_j(1-\eta_j),
        \qquad
        r_j:=c_\alpha\lambda_j.
\]
Then
$
        h_j=\alpha\lambda_j(1+o(1)),
        |\log r_j|=|\log\lambda_j|+O(1).
$
The lower bound on \(v_j\) holds pointwise on
\(D_{r_j}(p_j)\), hence \(d\sigma\)-a.e.
We define
\begin{equation}\label{eq:def-Aj-Bj}
        A_j:=\int_{S^2}|\nabla_{S^2}u_j|^2\,d\sigma,
        \qquad
        B_j:=\int_{S^2}u_j^2\,d\sigma.
\end{equation}
We also set
\begin{equation}\label{eq:def-Qj-muj}
        Q_j^{(2)}:=A_j-2B_j,
        \qquad
        \mu_j:=\frac{B_j}{L_j}.
\end{equation}

Before applying Lemma~\ref{lem:spectral}, we verify that its compactness
assumptions are satisfied. By Lemma~\ref{lem:hausdorff-support-radial},
$
        \|u_j\|_{L^\infty(S^2)}=\lambda_j.
$
Hence, using the definition of \(B_j\) in \eqref{eq:def-Aj-Bj}, we get
\begin{equation}\label{eq:Bj-O-lambda-square}
        B_j
        =
        \int_{S^2}u_j^2\,d\sigma
        \le
        4\pi \lambda_j^2.
\end{equation}
We now estimate \(A_j\). By Corollary~\ref{cor:fuglede-automatic},
applied to \(K_j\), we have
$
        P(K_j)-4\pi
        =
        \frac12(A_j-2B_j)+R_j,
$
with
$
        |R_j|
        \le
        C\sqrt{\lambda_j}(A_j+B_j).
$
Therefore
\[
        A_j
        =
        2\bigl(P(K_j)-4\pi\bigr)+2B_j-2R_j,
\]
and so
\[
        A_j+B_j
        \le
        C\bigl(P(K_j)-4\pi\bigr)
        +C B_j
        +C\sqrt{\lambda_j}(A_j+B_j).
\]
Since
$
        P(K_j)-4\pi=P(F_j)-4\pi=4\pi\delta_j
$
and, by \eqref{eq:def-delta-j-bound-Lj},
$
        \delta_j\le C L_j
        =
        C\frac{\lambda_j^2}{|\log\lambda_j|}
        \le C\lambda_j^2,
$
we obtain, using also \eqref{eq:Bj-O-lambda-square},
\[
        A_j+B_j
        \le
        C\lambda_j^2
        +
        C\sqrt{\lambda_j}(A_j+B_j).
\]
For \(j\) large enough, the last term can be absorbed into the left-hand
side. Hence
\begin{equation}\label{eq:Aj-Bj-O-lambda-square}
        A_j+B_j=O(\lambda_j^2).
\end{equation}
Together with
$
        \|u_j\|_{L^\infty(S^2)}=\lambda_j\to0,
$
this yields
$
        u_j\to0
   \text{in }W^{1,2}(S^2)\cap L^\infty(S^2).
$
The volume constraint holds because \(K_j\in\cA\), and the first-moment
condition was imposed by construction. Therefore Lemma~\ref{lem:spectral}
applies and gives
\begin{equation}\label{eq:spectral-Qj-lower-bound}
        Q_j^{(2)}
        \ge
        4B_j+o(B_j).
\end{equation}
We claim that $(\mu_j)_j$ is bounded. Suppose by contradiction that $\mu_j\to+\infty$, i.e.,
\[
        \frac{B_j}{L_j}\to+\infty.
\]
By \eqref{eq:spectral-Qj-lower-bound}, we have
$
        \frac{Q_j^{(2)}}{L_j}\to+\infty.
$
Using \eqref{eq:perimeter-expansion-o-Lj}, \eqref{eq:def-Aj-Bj} and \eqref{eq:def-Qj-muj} we have
\begin{equation}\label{eq:perimeter-Qj-o-Lj}
        P(K_j)-4\pi
        =
        \frac12 Q_j^{(2)}+o(L_j).
\end{equation}
and
$
        \frac{P(K_j)-4\pi}{L_j}\to+\infty.
$
Equivalently,
$
        \frac{\delta_j}{L_j}\to+\infty.
$
This contradicts \eqref{eq:def-delta-j-bound-Lj}. Hence this case cannot occur
along a \(\cQstar\)-bounded sequence.
We can therefore assume that \((\mu_j)_j\) is bounded. After
passing to a further subsequence, we may assume that
\[
        \mu_j\to\mu\ge0.
\]
Then
$
        B_j=\mu_j L_j=O(L_j)=o(\lambda_j^2).
$
Since
$
        \|v_j\|_{L^2(S^2)}^2
        =
        \|u_j\|_{L^2(S^2)}^2
        =
        B_j,
$
we have
\[
        \|v_j\|_{L^2(S^2)}=o(\lambda_j)=o(h_j).
\]
Thus Lemma~\ref{lem:one-disk-logarithmic-estimate} applies to \(v_j\) on
the disk \(D_{r_j}(p_j)\), and gives
\[
        A_j
        =
        \int_{S^2}|\nabla_{S^2}v_j|^2\,d\sigma
        \ge
        2\pi\alpha^2 L_j(1+o(1)).
\]
Consequently,
\[
        \frac{Q_j^{(2)}}{L_j}
        =
        \frac{A_j-2B_j}{L_j}
        \ge
        2\pi\alpha^2-2\mu_j+o(1).
\]
On the other hand, by Lemma~\ref{lem:spectral}, we get
\begin{equation}\label{eq:spectral-Qj-over-Lj}
        \frac{Q_j^{(2)}}{L_j}
        \ge
        4\mu_j+o(1).
\end{equation}
The last two estimates imply
\[
        \frac{Q_j^{(2)}}{L_j}
        \ge
        \max\{2\pi\alpha^2-2\mu_j,\;4\mu_j\}+o(1).
\]
Taking the lower limit and using \(\mu_j\to\mu\), we get
$
        \liminf_{j\to\infty}\frac{Q_j^{(2)}}{L_j}
        \ge
        \max\{2\pi\alpha^2-2\mu,\;4\mu\}.
$

Since the above argument applies to every subsequence along which
\(\mu_j\) converges, we obtain
\[
        \liminf_{j\to\infty}\frac{Q_j^{(2)}}{L_j}
        \ge
        \min_{\mu'\ge0}
        \max\{2\pi\alpha^2-2\mu',\;4\mu'\}=\frac{4\pi\alpha^2}{3}.
\]
Using \eqref{eq:perimeter-Qj-o-Lj} we get
\[
        P(K_j)-4\pi
        \ge
        \left(\frac{2\pi\alpha^2}{3}+o(1)\right)L_j.
\]
Therefore
\begin{equation}\label{eq:delta-j-lower-bound-alpha}
        \delta_j
        =
        \frac{P(K_j)-4\pi}{4\pi}
        \ge
        \left(\frac{\alpha^2}{6}+o(1)\right)
        \frac{\lambda_j^2}{|\log\lambda_j|}.
\end{equation}
By
Corollary~\ref{cor:remainder-Q-bounded}, we also have
\begin{equation}\label{eq:delta-j-upper-bound-log}
        \delta_j
        \le
        C\frac{\lambda_j^2}{|\log\lambda_j|}.
\end{equation}
Combining \eqref{eq:delta-j-lower-bound-alpha} and \eqref{eq:delta-j-upper-bound-log}, we obtain
\[
        \log\frac1{\delta_j}
        \ge
        2|\log\lambda_j|+\log|\log\lambda_j|-O(1)
        =
        2|\log\lambda_j|+o(|\log\lambda_j|).
\]
Since \(\delta_j\to0\), we have
$
        \log(\delta_j+\delta_j^{-1})
        =
        \log\frac1{\delta_j}+o(1),
$
and therefore
$
        \log(\delta_j+\delta_j^{-1})
        \ge
        2|\log\lambda_j|+o(|\log\lambda_j|).
$
Combining this with \eqref{eq:delta-j-lower-bound-alpha}, we get
\[
        \delta_j\log(\delta_j+\delta_j^{-1})
        \ge
        \left(\frac{\alpha^2}{3}+o(1)\right)\lambda_j^2.
\]
Finally, since \(\delta_j\to0\), we have \(P(F_j)=P(K_j)\to4\pi\). Moreover,
\(\lambda_*(F_j)\le\lambda_j\). Hence
\[
        \cQstar(F_j)
\ge
        \frac{
        \delta_j\log(\delta_j+\delta_j^{-1})P(F_j)^3
        }{\lambda_j^2}                                      \ge
        \left(\frac{\alpha^2}{3}+o(1)\right)(4\pi)^3.
\]
Taking the lower limit and then letting \(\alpha\to 1\), we conclude that
\[
        \liminf_{j\to\infty}\cQstar(F_j)
        \ge
        \frac13(4\pi)^3
        =
        \frac{64}{3}\pi^3.
\]
This completes the proof.
\end{proof}

\section*{Acknowledgements}

This work is partially supported by the ANR project STOIQUES
financed by the
French Agence Nationale de la Recherche (ANR).
GP is a member of the Gruppo Nazionale per l'Analisi Matematica, la Probabilit\`a e le loro Applicazioni (GNAMPA) of the Istituto Nazionale di Alta Matematica (INdAM) and wishes to acknowledge financial support from INdAM GNAMPA Project 2026 ``Problemi di ottimizzazione di forma in contesti anisotropi e non-locali'' (CUP E53C25002010001). The work of GP was partially supported by the project ``Harmony'' (code MATE.\ IP.\ DR111.\ 2024.\ HARMONY) within the program of the University ``Luigi Vanvitelli'' and by the Portuguese government through FCT -- Funda\c{c}\~ao para a Ci\^encia e a Tecnologia, I.P., project 2024.14494.PEX with DOI identifier 10.54499/2024.14494.PEX (project ASSO).
The second author thanks D. Bucur, J. Lamboley and E. Parini for some useful discussions about this problem.

\end{document}